\newtheorem*{theorem*}{Theorem A}
\newtheorem*{theoremm*}{Theorem B}
\newtheorem*{theoremm1*}{Theorem A'}
\newtheorem*{theoremmm*}{Theorem C}
\newtheorem{lemma}{Lemma}[subsection]
\newtheorem{proposition}[lemma]{Proposition}
\newtheorem{remark}[lemma]{Remark}
\newtheorem{theorem}[lemma]{Theorem}
\newtheorem{definition}[lemma]{Definition}
\newtheorem{notation}[lemma]{Notation}
\newtheorem{property}[lemma]{Property}
\newtheorem{corollary}[lemma]{Corollary}
\sloppy \theoremstyle{plain}
\newcommand{\oo}{\mathcal{O}}
\newcommand{\cc}{\mathbb{C}}
\newcommand{\rr}{\mathbb{R}}
\newcommand{\Fre}{{Fr\'{e}chet \,}}
\newcommand{\Z}{{\mathbb Z}}
\newcommand{\R}{{\mathbb R}}
\newcommand{\C}{{\mathbb C}}
\newcommand{\Sc}{{\mathcal S}}
\newcommand{\Fou}{{\mathcal{F}}}
\newcommand{\g}{{\mathfrak{g}}}
\newcommand{\h}{{\mathfrak{h}}}
\newcommand{\Supp}{\mathrm{Supp}}
\newcommand{\gd}{\g^{\sigma}}
\begin{document}

\author{Avraham Aizenbud}

\address{Avraham Aizenbud, Faculty of Mathematics
and Computer Science, The Weizmann Institute of Science POB 26,
Rehovot 76100, ISRAEL.} \email{aizenr@yahoo.com}

\author{Eitan Sayag}

\address{Eitan Sayag, Einstein Institute of Mathematics,
Edmond J. Safra Campus, Givat Ram, The Hebrew University of
Jerusalem, Jerusalem, 91904, Israel} \email{sayag@math.huji.ac.il}

\title[The Gelfand property of $(GL_{2n}(\rr),Sp_{2n}(\rr))$]{Invariant
distributions on non-distinguished nilpotent orbits with
application to the Gelfand property of
$(GL_{2n}(\rr),Sp_{2n}(\rr))$}
\date{\today}

\keywords{Symmetric pair, Gelfand pair, Symplectic group, Non-distinguished orbits, Multiplicity one, Invariant distribution, Co-isotropic subvariety. \\
\indent MSC Classification: 20G05, 22E45, 20C99, 46F10}
%
%
%
%
%
%
%
%
%

\begin{abstract}
We study invariant distributions on the tangent space to a
symmetric space. We prove that an invariant distribution with the
property that both its support and the support of its Fourier
transform are contained in the set of non-distinguished nilpotent
orbits, must vanish. We deduce, using recent developments in the
theory of invariant distributions on symmetric spaces that the
symmetric pair $(GL_{2n}(\rr),Sp_{2n}(\rr))$ is a Gelfand pair.
More precisely, we show that for any irreducible smooth admissible
\Fre representation $(\pi,E)$ of $GL_{2n}(\rr)$ the space of
continuous functionals $Hom_{Sp_{2n}(\rr)}(E,\cc)$ is at most one
dimensional.
Such a result was previously proven for $p$-adic fields in
\cite{HR} for $\cc$ in \cite{S}.
\end{abstract}

 \maketitle
\tableofcontents
\section{Introduction}\label{intro}


Let $(V,\omega)$ be a symplectic vector space over $\rr$.
Consider the standard imbedding
$Sp(V,\omega) \subset GL(V)$ and the natural action of
$Sp(V,\omega) \times Sp(V,\omega)$ on $GL(V)$. In this paper we
prove the following theorem:

\begin{theorem*}
 Any $Sp(V,\omega) \times Sp(V,\omega)$ - invariant distribution on $GL(V)$
is invariant with respect to transposition.
\end{theorem*}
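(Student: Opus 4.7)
The statement is the Gelfand--Kazhdan criterion for the symmetric pair $(G, H) := (GL(V), Sp(V,\omega))$. With $J$ denoting the Gram matrix of $\omega$ (so that $J^t = -J$), the involution $\theta(g) := J (g^{-1})^t J^{-1}$ on $G$ has fixed subgroup conjugate to $H$, and the associated anti-involution $\sigma(g) := \theta(g)^{-1} = J g^t J^{-1}$ differs from the transposition $g \mapsto g^t$ only by conjugation by $J$, which normalizes $H$. Consequently, transposition-invariance of bi-$H$-invariant distributions is equivalent to $\sigma$-invariance, and the plan is to establish the latter via the theory of invariant distributions on real symmetric pairs.

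My approach would be to apply the linearization and Harish-Chandra descent machinery of Aizenbud--Gourevitch. Via a Luna-slice argument, the question on the group transfers to the tangent space of $G/H$ at the identity, identified with $\g^{-\theta} := \{X \in gl(V) : X^t J + J X = 0\}$ on which $H$ acts. It suffices to show that every $H$-invariant, $\sigma$-anti-invariant distribution $\xi$ on $\g^{-\theta}$ vanishes. Inducting on $\dim V$ and running semisimple descent---at a non-central semisimple element of $\g^{-\theta}$ the slice is controlled by a strictly smaller symmetric pair of the same type, for which the Gelfand property is known by induction---reduces to the case where $\Supp(\xi) \subset \mathcal{N}$, the nilpotent cone of $\g^{-\theta}$. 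The integrability theorem for real symmetric pairs then forces the Fourier transform $\widehat{\xi}$, taken with respect to the $H$-invariant bilinear form on $\g^{-\theta}$, to be supported in $\mathcal{N}$ as well.

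At this stage the $H$-orbits in $\mathcal{N}$ split into distinguished and non-distinguished ones. For orbits of distinguished type, I would invoke the results alluded to as \emph{recent developments} in the abstract, which yield the required vanishing directly. The remaining case---$\xi$ and $\widehat{\xi}$ both supported on the union of non-distinguished orbits---is precisely the main technical theorem of the paper cited in the abstract, and is therefore the main obstacle. My strategy for that theorem would be to exploit the fact that every non-distinguished nilpotent orbit in $\g^{-\theta}$ is parabolically induced from a strictly smaller Levi symmetric subpair: first verify that the closure of the union of non-distinguished orbits is a co-isotropic subvariety of $\g^{-\theta}$ (viewed symplectically via the invariant bilinear form), a property behaving symmetrically under Fourier transform; then combine this co-isotropy criterion with the inductive hypothesis on Levi subpairs to propagate the vanishing of $\xi$ from the nilpotent cones of the smaller pairs up to $\g^{-\theta}$.
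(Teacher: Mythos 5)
You correctly identify the architecture: the Gelfand--Kazhdan criterion reduces the theorem to a statement on bi-$H$-invariant distributions, Harish-Chandra descent transfers the problem to $\g^{\sigma}$ and forces both $\Supp(\xi)$ and $\Supp(\Fou\xi)$ into the nilpotent cone $\mathcal{N}_{G,H}$, and the crux is the case where both are supported on the union $A$ of non-distinguished orbits. Two side remarks along the way: the inclusion $\Supp(\Fou\xi)\subset\mathcal{N}_{G,H}$ is a direct consequence of the regularity hypothesis (since $\Fou$ preserves $H$-equivariance), not of the integrability theorem; and the real descendants of $(GL_{2n},Sp_{2n})$ are not all of the same real type---they include the restriction-of-scalars pairs $((GL_{2m})_{\cc/\rr},(Sp_{2m})_{\cc/\rr})$, whose regularity the paper imports from Sayag's complex result.

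Your proposed attack on the crux has a genuine gap. You want to show that the closure of $A$ is a co-isotropic subvariety of $\g^{\sigma}$ ``viewed symplectically via the invariant bilinear form,'' but this does not parse: the $H$-invariant form on $\g^{\sigma}$ is the trace form, which is \emph{symmetric}, not alternating, so $\g^{\sigma}$ carries no natural symplectic structure. Co-isotropy enters on the cotangent bundle $T^*Q(\g^{\sigma})\cong Q(\g^{\sigma})\times Q(\g^{\sigma})$, and the argument in fact runs in the opposite direction to what you describe: one proves that the commuting variety $B=\{(\alpha,\beta)\in A\times A:[\alpha,\beta]=0\}$ contains \emph{no} non-empty $T^*Q(\g^{\sigma})$-co-isotropic subvariety. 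The mechanism is a dimension count: for any nilpotent orbit $\mathcal{O}$, the centralizer variety $\{(a,b):a\in\mathcal{O},\,b\in Q(\g^{\sigma}),\,[a,b]=0\}$ has dimension exactly $\dim Q(\g^{\sigma})=\tfrac12\dim T^*Q(\g^{\sigma})$, and non-distinguishedness of $\mathcal{O}$ (that is, $\g_a\cap Q(\g^{\sigma})\not\subset\mathcal{N}_{G,H}$) guarantees that further imposing $b\in\mathcal{N}_{G,H}$ excludes every irreducible component and strictly drops dimension. Combined with Gabber's integrability theorem, which forces $SS(\xi)$ to be co-isotropic, and with the bounds on $SS(\xi)$ supplied by $H$-invariance and the two support conditions, this gives $SS(\xi)=\emptyset$ and hence $\xi=0$. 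The parabolic-induction characterization of non-distinguished orbits that you invoke does not appear in the paper, and there is no analogue of Harish-Chandra descent at a nilpotent element through which your ``propagation from Levi subpairs'' could close. Finally, the orbit classification the paper relies on, including the existence of a unique open distinguished $H$-orbit, comes from the Goldstein--Guralnick bijection between $H$-orbits in $\mathcal{N}_{G,H}$ and nilpotent conjugacy classes in $\mathrm{gl}_n$, which your sketch does not touch.
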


It has the following corollary in representation theory:

\begin{theoremm*}
 Let $(V,\omega)$ be a symplectic vector space and let E be an irreducible admissible smooth
\Fre representation of $GL(V).$ Then $$dimHom_{Sp(V)}(E,\cc) \leq
1$$
\end{theoremm*}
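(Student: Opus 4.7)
The plan is to deduce Theorem~B from Theorem~A by the Gelfand--Kazhdan method, in the version adapted to smooth Fr\'echet representations of moderate growth on real reductive groups (as developed by Sun--Zhu and by Aizenbud--Gourevitch). Let $\sigma \colon GL(V) \to GL(V)$ be transposition with respect to a basis of $V$ in which the Gram matrix of $\omega$ is the standard symplectic matrix $J$. Then $\sigma$ is an anti-involution, and for $g \in Sp(V,\omega)$ the identity ${}^tg = J g^{-1} J^{-1}$, together with $J \in Sp(V,\omega)$, shows that $\sigma$ preserves $Sp(V,\omega)$.

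The Gelfand--Kazhdan criterion asserts that, for such an anti-involution $\sigma$, the statement ``every bi-$Sp(V)$-invariant distribution on $GL(V)$ is $\sigma$-invariant'' implies
$$\dim \Hom_{Sp(V)}(E,\cc) \cdot \dim \Hom_{Sp(V)}(\tilde{E},\cc) \leq 1$$
for every irreducible admissible smooth \Fre representation $E$ of $GL(V)$. Theorem~A provides precisely the distributional input required.

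To upgrade this product bound to a bound on each factor, I would invoke the classical description of the contragredient on $GL(V)$: for any irreducible admissible smooth representation $E$, one has $\tilde{E} \cong E \circ \tau$ where $\tau(g) = {}^tg^{-1}$. Since $\tau$ also preserves $Sp(V,\omega)$, the identity on underlying spaces induces an isomorphism $\Hom_{Sp(V)}(E,\cc) \cong \Hom_{Sp(V)}(\tilde{E},\cc)$. Combined with the previous step this yields $(\dim \Hom_{Sp(V)}(E,\cc))^2 \leq 1$, which is Theorem~B.

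The main obstacle is the validity of the Gelfand--Kazhdan criterion in the \Fre setting, since the original criterion was formulated for $p$-adic groups and for admissible representations without topology. The Archimedean version requires non-trivial analytic input --- notably a distributional character and matrix-coefficient theory for smooth \Fre representations of moderate growth --- in order to convert the existence of two independent $Sp(V)$-invariant continuous functionals into a bi-$Sp(V)$-invariant distribution on $GL(V)$ that fails to be $\sigma$-invariant. Once this machinery is granted, the deduction of Theorem~B from Theorem~A is essentially formal.
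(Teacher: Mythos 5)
Your proposal is correct and follows essentially the same route as the paper: the paper deduces Theorem~B from Theorem~A by invoking the archimedean Gelfand--Kazhdan criterion of [AGS] (Theorem~\ref{DistCrit}) to get GP2, and then the equivalence of GP1 and GP2 for transpose-invariant subgroups of $GL_n$ (Proposition~2.4.1 of [AGS]), which is proved by exactly the contragredient observation $\widetilde{E}\cong E\circ\tau$ that you spell out. Your verification that transposition preserves $Sp(V,\omega)$ and your identification of the analytic content (matrix-coefficient/distributional-character machinery for smooth Fr\'echet representations of moderate growth) that makes the criterion work over $\mathbb{R}$ match the paper's reliance on [AGS].
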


Theorem B is deduced from Theorem A using the Gelfand-Kazhdan
method (adapted to the archimedean case in [AGS]).




The analogue of Theorem A and Theorem B for non-archimedean fields
were proven in \cite{HR} using the method of Gelfand and Kazhdan.
A simple argument over finite fields is explained in \cite{GG} and
using this a simpler proof of the non-archimedean case was written
in \cite{OS}. Recently, one of us, using the ideas of \cite{AG_HC}
extended the result to the case $F=\cc$ (see \cite{S}).

Our proof of Theorem A is based on the methods of \cite{AG_HC}. In
that work the notion of regular symmetric pair was introduced and
shown to be a useful tool in the verification of the Gelfand
property . Thus, the main result of the present work is the {\it
regularity} of the symmetric pair $(GL(V),Sp(V,\omega).$ In
previous works the proof of regularity of symmetric pairs was
based either on some simple considerations or on a criterion that
requires negativity of certain eigenvalues (this was implicit in
\cite{JR}, \cite{RR} and was explicated in \cite{AG_HC},
\cite{AG_JR}, \cite{AG_regular}, \cite{S}).

The pair $(GL(V),Sp(V,\omega))$ does not satisfy the above
mentioned criterion and requires new techniques.

\subsection{Main ingredients of the proof}
$ $\\
To show regularity we study distributions on the space
$\g^{\sigma}=\{X \in gl_{2n}: JX=XJ\}$ where $J =
\begin{pmatrix}
  0_{n} & Id_{n} \\
  -Id_{n} & 0_{n}
\end{pmatrix}.$ More precisely, we are interested in those
distributions that are invariant with respect to the conjugation
action of $Sp_{2n}$ and supported on the nilpotent cone. To
classify the nilpotent orbits of the action we use the method of
\cite{GG} to identify these orbits with nilpotent orbits of the
adjoint action of $GL_{n}$ on its Lie algebra. This allows us to
show that there exists a unique {\it distinguished} nilpotent
orbit $\oo$ and that this orbit is open in the nilpotent cone.
Next, we use the theory of $D$ modules, as in \cite{AG_strong}, to
prove that there are no distributions supported on
non-distinguished orbits whose Fourier transform is also supported
on non-distinguished orbits (see Theorem \ref{maintrick}).

\subsection{Structure of the paper}
$ $\\
In section \ref{sec1} we give some preliminaries on distributions,
symmetric pairs and Gelfand pairs. We introduce the notion of
regular symmetric pairs and show that Theorem 7.4.5 of
\cite{AG_HC} and the results of \cite{S} allow us to reduce the
Gelfand property of the pair in question to proving that the pair
is regular. In section \ref{sec2} we prove the main technical
result on distributions,  Theorem \ref{maintrick}. It states that
under certain conditions there are no distributions supported on
non-distinguished nilpotent orbits. The proof is based on the
theory of $D$-modules. In section \ref{sec3} we use Theorem
\ref{maintrick} to prove that the pair $(GL(V),Sp(V,\omega))$ is
regular.


\subsection{Acknowledgements.}
$ $\\
We thank Dmitry Gourevitch and Omer Offen for fruitful
discussions. Part of the work on this paper was done while the
authors visited the Max Planck Institute for Mathematics in Bonn.
The visit of the first named author was funded by the Bonn
International Graduate School. The visit of the second named
author was partially funded by the Landau center of the Hebrew
University.

\section{Preliminaries}\label{sec1}
\subsection{Notations on invariant distributions}
\subsubsection{Schwartz distributions on Nash manifolds}
$ $\\
We will use the theory of Schwartz functions and distributions as
developed in \cite{AG_Sch}. This theory is developed for Nash
manifolds. Nash manifolds are smooth semi-algebraic manifolds but
in the present work only smooth real algebraic manifolds are
considered. Therefore the reader can safely replace the word {\it
Nash} by {\it smooth real algebraic}.

Schwartz functions are functions that decay, together with all
their derivatives, faster than any polynomial. On $\R^n$ it is the
usual notion of Schwartz function. For precise definitions of
those notions we refer the reader to \cite{AG_Sch}. We will use
the following notations.

\begin{notation}
Let $X$ be a Nash manifold. Denote by $\Sc(X)$ the \Fre space of
Schwartz functions on $X$.

Denote by $\Sc^*(X):=\Sc(X)^*$ the space of Schwartz distributions
on $X$.

For any Nash vector bundle $E$ over $X$ we denote by $\Sc(X,E)$ the
space of Schwartz sections of $E$ and by $\Sc^*(X,E)$ its dual
space.
\end{notation}

\begin{notation}
Let $X$ be a smooth manifold and let $Z \subset X$ be a closed
subset. We denote $\Sc^*_X(Z):= \{\xi \in \Sc^*(X)|\Supp(\xi)
\subset Z\}$.

For a locally closed subset $Y \subset X$ we denote
$\Sc^*_X(Y):=\Sc^*_{X\setminus (\overline{Y} \setminus Y)}(Y)$. In
the same way, for any bundle $E$ on $X$ we define $\Sc^*_X(Y,E)$.
\end{notation}

\begin{remark}
Schwartz distributions have the following two advantages over
general distributions:\\
(i) For a Nash manifold $X$ and an open Nash submanifold $U\subset
X$, we have the following exact sequence
$$0 \to \Sc^*_X(X \setminus U)\to \Sc^*(X) \to \Sc^*(U)\to 0.$$
(ii) Fourier transform defines an isomorphism $\Fou:\Sc^*(\R^n)
\to \Sc^*(\R^n)$.
\end{remark}

\subsubsection{Basic tools}
$ $\\
We present here some basic tools on equivariant distributions that we
will use in  this paper.

\begin{proposition} \label{Strat}
Let a Nash group $G$ act on a Nash manifold $X$.
Let $Z \subset X$ be a closed subset.

Let $Z =
\bigcup_{i=0}^l Z_i$ be a Nash $G$-invariant stratification of
$Z$. Let $\chi$ be a character of $G$. Suppose that for any $k \in
\Z_{\geq 0}$ and $0 \leq i \leq l$ we have $\Sc^*(Z_i,Sym^k(CN_{Z_i}^X))^{G,\chi}=0$. Then
$\Sc^*_X(Z)^{G,\chi}=0$.
\end{proposition}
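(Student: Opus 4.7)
The plan is a two-step reduction: first pass from the stratified closed set $Z$ to each single stratum via the short exact sequence of the Remark, and then from a single closed smooth Nash submanifold to the symmetric powers of its conormal bundle via the natural filtration by transversal order on distributions. For the stratification step, order the strata so that $Y_{\leq i} := \bigcup_{j \leq i} Z_j$ is closed in $X$ for every $i$ (possible because the stratification is Nash; one may take the ordering by non-increasing dimension). Setting $U_i := X \setminus Y_{\leq i-1}$, the stratum $Z_i$ is closed in the Nash manifold $U_i$. Combining the exact sequences of the Remark for $U_i \subset X$ and for $U_i \setminus Z_i \subset U_i$ yields the short exact sequence
$$0 \to \Sc^*_X(Y_{\leq i-1}) \to \Sc^*_X(Y_{\leq i}) \to \Sc^*_{U_i}(Z_i) \to 0.$$
Applying the left-exact functor $(-)^{G,\chi}$ and inducting on $i$, it suffices to prove $\Sc^*_{U_i}(Z_i)^{G,\chi} = 0$ for each $i$.

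Next, fix $i$ and write $Y := Z_i$, $X' := U_i$, so that $Y$ is a closed smooth Nash submanifold of $X'$. Filter $\Sc^*_{X'}(Y)$ by transversal order: let $F_k$ denote the subspace of Schwartz distributions annihilated by the $(k+1)$-st power of the ideal of functions vanishing on $Y$. This filtration is $G$-equivariant, and a local computation in a Nash tubular neighborhood produces canonical $G$-equivariant identifications
$$F_k/F_{k-1} \;\cong\; \Sc^*\bigl(Y,\, Sym^k(CN_Y^{X'})\bigr)$$
(up to a twist by the density bundle of $X'$ relative to $Y$; since this twist is a single fixed character and the hypothesis is imposed for every $k$, it may be absorbed into $\chi$). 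The hypothesis therefore gives $(F_k/F_{k-1})^{G,\chi}=0$ for every $k$, and induction on $k$ together with left-exactness of $(-)^{G,\chi}$ yields $F_k^{G,\chi}=0$ for all $k$.

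The last step is to deduce $\Sc^*_{X'}(Y)^{G,\chi}=0$ from vanishing at every finite level. This uses the exhaustiveness $\Sc^*_{X'}(Y) = \bigcup_{k \geq 0} F_k$, a characteristic feature of the Schwartz category: since $\Sc(X')$ is Fréchet with a countable family of seminorms, every continuous functional is bounded by finitely many of them and hence has globally bounded order; when the distribution is supported on $Y$, this places it in some $F_k$. The main technical obstacle I anticipate is the precise formulation of the conormal associated-graded isomorphism in the middle step, in particular handling the density-bundle twist so that $G$-equivariance is genuinely preserved; this is standard (see \cite{AG_Sch}) but requires some care. With these ingredients assembled, the induction from the stratification step delivers $\Sc^*_X(Z)^{G,\chi}=0$, as desired.
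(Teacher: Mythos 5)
Your proposal reconstructs a proof of a statement that the paper itself does not prove: the authors simply cite it as Corollary~7.2.6 of \cite{AGS}, where it is established by exactly the two-step reduction you describe (stratification short exact sequences, then the conormal filtration from \cite{AG_Sch}). So your argument is not a genuinely different route; it is a from-scratch reproof of the cited result, and its skeleton is correct: the reordering of strata so that $\bigcup_{j\le i}Z_j$ is closed, the resulting short exact sequences from the Remark, the filtration $F_k$ by transversal order, the associated-graded identification with Schwartz sections of $Sym^k$ of the conormal bundle, and the exhaustiveness of the filtration (this last point -- that a tempered distribution supported on a closed submanifold has globally bounded transversal order because the testing space is Fr\'echet -- is precisely the advantage Schwartz distributions have over general distributions and is the crucial step that would fail without it).

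The one place you are imprecise is the density twist. You write that the twist by $D_{X'}|_Y\otimes D_Y^{-1}$ is ``a single fixed character'' of $G$ that may be absorbed into $\chi$. That is not right as stated: $D_{X'}|_Y\otimes D_Y^{-1}\cong \left|\wedge^{\mathrm{top}}CN_Y^{X'}\right|$ is a $G$-equivariant line bundle \emph{on $Y$}, not a character of $G$, and absorbing a nontrivial equivariant line bundle into a character makes no sense unless that bundle happens to be equivariantly isomorphic to a trivial bundle twisted by a character (which is true when $Y$ is a single orbit, but $Z_i$ is only assumed to be a Nash $G$-invariant stratum). Moreover, your parenthetical reason -- ``the hypothesis is imposed for every $k$'' -- is not related to why a twist could be absorbed. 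In fact, with the paper's convention $\Sc^*(Y,E):=\Sc(Y,E)^*$, the correct associated graded is $F_k/F_{k-1}\cong\Sc^*(Y,Sym^k(CN_Y^{X'}))$ with \emph{no} density twist: the map $\xi\mapsto\xi|_{I_Y^k}$ identifies $F_k/F_{k-1}$ with the dual of $I_Y^k\,\Sc(X')/I_Y^{k+1}\,\Sc(X')\cong\Sc(Y,Sym^k(CN_Y^{X'}))$, matching the hypothesis of the Proposition exactly. So the twist you hedged about is absent; your proof goes through, but you should either drop the incorrect ``single fixed character'' justification or replace it with the observation that the convention $\Sc^*=\Sc^*$-dual already cancels the densities.
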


This proposition immediately follows from Corollary 7.2.6 in
\cite{AGS}.

\begin{theorem}[Frobenius reciprocity] \label{Frob}
Let a Nash group $G$ act transitively on a Nash manifold $Z$. Let
$\varphi:X \to Z$ be a $G$-equivariant Nash map. Let $z\in Z$. Let
$G_z$ be its stabilizer. Let $X_z$ be the fiber of $z$. Let $\chi$
be a character of $G$. Then $\Sc^*(X)^{G,\chi}$ is canonically
isomorphic to $\Sc^*(X_z)^{G_z,\chi \cdot \Delta_G|_{G_z} \cdot
\Delta_{G_z}^{-1}}$ where $\Delta$ denotes the modular character.
\end{theorem}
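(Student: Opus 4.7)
The plan is to exploit the transitivity of $G$ on $Z$ to identify $Z \cong G/G_z$ and thereby realize $\varphi : X \to Z$ as the homogeneous fiber bundle $G \times_{G_z} X_z \to G/G_z$. In the Nash category this fibration admits local trivializations by Nash sections of $G \to G/G_z$, so one can work locally with products $U \times X_z$ and glue via a Nash partition of unity, leaning on the Schwartz function and distribution theory of \cite{AG_Sch} together with the exact sequence for $\Sc^*$ recalled in the Remark just above the statement.

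I would produce the isomorphism by constructing two mutually inverse maps. For the restriction direction, take $\xi \in \Sc^*(X)^{G,\chi}$; on each trivialization $U \times X_z$ one obtains a continuous family of Schwartz distributions parametrized by $U$, and its value at the basepoint $z \in U$ is a distribution on $X_z$ that is independent of the choice of local trivialization, by the $G$-invariance of $\xi$. The action of $G_z$ on this fiber distribution inherits a character twist: the Haar measure on $G$ descends to a $G$-quasi-invariant measure on $G/G_z$ whose Radon--Nikodym cocycle along $G_z$ is precisely $\Delta_G|_{G_z} \cdot \Delta_{G_z}^{-1}$, and this factor multiplies $\chi$ when one converts invariance on $X$ into invariance on the fiber.

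For the induction direction, take $\eta \in \Sc^*(X_z)^{G_z,\, \chi \cdot \Delta_G|_{G_z} \cdot \Delta_{G_z}^{-1}}$ and define $\xi$ on $f \in \Sc(X)$ by pulling $f$ back through the orbit map $G \times X_z \to X$, averaging the result over $G_z$ weighted by the modular factor to obtain a Schwartz section on $G/G_z$, and finally pairing against a $\chi$-equivariant measure on $G/G_z$. The character hypothesis on $\eta$ is exactly what makes the $G_z$-averaging well defined and the result $\chi$-equivariant. The main obstacle is the analytic verification that all of these operations (restriction to a submanifold, fiber integration, averaging over a closed Nash subgroup) preserve the Schwartz categories $\Sc$ and $\Sc^*$ and that the two maps so constructed are mutually inverse; for this I would not reprove the foundations but invoke the continuity and exactness results of \cite{AG_Sch} and \cite{AGS}, which are tailored to precisely this situation.
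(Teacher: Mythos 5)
The paper itself does not prove this theorem: after the statement it simply writes ``For proof see \cite{AG_HC}, Theorem 2.3.8.'' So there is no in-paper argument to compare your proposal against; you are reconstructing a proof that the authors delegate to a reference. What you sketch is, broadly, the standard strategy used in \cite{AG_HC} and its predecessors: identify $Z\cong G/G_z$, trivialize $\varphi$ locally via Nash sections, reduce to the product case, and glue with a Nash partition of unity using the exactness of $\Sc^*$ on open covers. In that sense the proposal is on the right track and not a genuinely different route.

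That said, as written it has a gap in the ``restriction'' direction that is worth naming. You say that $\xi\in\Sc^*(X)^{G,\chi}$, viewed on a trivialization $U\times X_z$, is ``a continuous family of Schwartz distributions parametrized by $U$'' whose ``value at the basepoint $z\in U$'' gives the fiber distribution. A general element of $\Sc^*(U\times X_z)$ is not a family of distributions on $X_z$ indexed by points of $U$, and it has no pointwise ``value'' at $z$; distributions cannot be restricted to submanifolds of positive codimension. What makes the fiber restriction possible here is precisely the $G$-equivariance: in a trivialization adapted to the $G$-action one shows that the equivariance condition forces $\xi$ to have the form (Haar-type density on the $G$-direction)$\,\otimes\,$(distribution on $X_z$), and it is this structural statement --- not mere independence of the choice of trivialization --- that produces a well-defined element of $\Sc^*(X_z)$. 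In your write-up, equivariance is invoked only to establish independence of trivialization, which presupposes that the restriction already exists. The modular factor $\Delta_G|_{G_z}\cdot\Delta_{G_z}^{-1}$ then drops out of the change-of-variables for that Haar-type density, as you say. The induction direction you describe is essentially correct, though one should check that the $G_z$-averaged pullback lands in $\Sc(G/G_z, \cdot)$ (Schwartz decay along the base, not just along the fiber) before pairing; for non-compact $Z$ this is not automatic and is one of the places where the cited foundational results really carry weight. None of these points changes the overall plan, but they are the actual content of Theorem 2.3.8 of \cite{AG_HC}, and a proof that leaves them to a final ``invoke continuity and exactness'' step has not yet done the work the theorem requires.
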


For proof see \cite{AG_HC}, Theorem 2.3.8.

\subsubsection{Fourier transform}
$ $\\
From now till the end of the paper we fix an additive character
$\kappa$ of $\R$ given by $\kappa(x):=e^{2\pi i x}$.

\begin{notation}
Let $V$ be a vector space over $\R$. Let $B$ be a non-degenerate
bilinear form on $V$. Then $B$ defines Fourier transform with
respect to the self-dual Haar measure on $V$. We denote it by
$\Fou_B: \Sc^*(V) \to \Sc^*(V)$.

For any Nash manifold $M$
we also denote by $\Fou_B:\Sc^*(M \times V) \to \Sc^*(M
\times V)$ the partial Fourier transform.

If there is no ambiguity, we will write $\Fou_V$, and sometimes
just $\Fou$, instead of $\Fou_B$.
\end{notation}

We will use the following trivial observation.

\begin{lemma}
Let $V$ be a finite dimensional vector space over $\rr$. Let a
Nash group $G$ act linearly on $V$. Let $B$ be a $G$-invariant
non-degenerate symmetric bilinear form on $V$. Let $M$ be a Nash
manifold with an action of $G$.  Let $\xi \in \Sc^*(V(\rr) \times
M)$ be a $G$-invariant distribution. Then $\Fou_B(\xi)$ is also
$G$-invariant.
\end{lemma}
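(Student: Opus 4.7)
The plan is a direct verification using the duality definition of the Fourier transform on distributions. The only thing one actually needs is that the Fourier transform $\mathcal{F}_B$ intertwines the $G$-action on Schwartz functions; then the statement for distributions follows automatically by passing to the transpose.

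First I would set up notation: for $g \in G$, write $L_g$ for the action on $\mathcal{S}(V \times M)$ given by $(L_g \phi)(v,m) = \phi(g^{-1}v, g^{-1}m)$, and dually on $\mathcal{S}^*(V \times M)$. The hypothesis on $\xi$ says $L_g^* \xi = \xi$ for all $g$. By definition of the partial Fourier transform on distributions,
\[
\langle \mathcal{F}_B(\xi), \phi \rangle = \langle \xi, \mathcal{F}_B(\phi) \rangle
\]
for every $\phi \in \mathcal{S}(V \times M)$. Hence $L_g^*(\mathcal{F}_B \xi) = \mathcal{F}_B \xi$ will follow at once once we know that $\mathcal{F}_B \circ L_g = L_g \circ \mathcal{F}_B$ as operators on $\mathcal{S}(V \times M)$, because then
\[
\langle L_g^*(\mathcal{F}_B \xi), \phi \rangle = \langle \xi, \mathcal{F}_B(L_g \phi) \rangle = \langle \xi, L_g(\mathcal{F}_B \phi) \rangle = \langle L_g^*\xi, \mathcal{F}_B \phi \rangle = \langle \mathcal{F}_B \xi, \phi \rangle.
\]

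The remaining step is the commutation on Schwartz functions, which is a one-line change of variables. Writing out
\[
\mathcal{F}_B(L_g \phi)(v,m) = \int_V \phi(g^{-1}w, g^{-1}m)\,\kappa(B(v,w))\,dw,
\]
I would substitute $w = g w'$; the self-dual Haar measure is preserved because it is determined by $B$ and $B$ is $G$-invariant, and $B(v, gw') = B(g^{-1}v, w')$ by $G$-invariance of $B$, giving $(L_g \mathcal{F}_B \phi)(v,m)$ on the nose. This is the only substantive ingredient, and it is immediate from the two invariance properties assumed.

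There is no real obstacle here; the lemma is labeled ``trivial'' in the text and indeed rests entirely on the fact that a $G$-invariant non-degenerate symmetric form produces both a $G$-invariant kernel $\kappa(B(v,w))$ and a $G$-invariant measure $dw$. The same argument works without change for any character in place of $\kappa$ and for partial Fourier transform in the $V$-variable over any parameter manifold $M$, which is why the statement is formulated with the product $V \times M$.
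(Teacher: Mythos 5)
Your proof is correct, and it is the standard (essentially the only) way to verify this statement: reduce to the commutation $\mathcal{F}_B \circ L_g = L_g \circ \mathcal{F}_B$ on Schwartz functions, which follows from a change of variables using that $G$-invariance of $B$ gives both invariance of the kernel $\kappa(B(v,w))$ and of the self-dual Haar measure. The paper itself offers no proof, labeling the lemma a ``trivial observation,'' so there is no alternative argument to compare against; your write-up fills in exactly the omitted routine verification.
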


\subsection{Gelfand pairs and invariant distributions}
$ $\\
In this section we recall a technique due to Gelfand and Kazhdan
(see \cite{GK}) which allows to deduce statements in
representation theory from statements on invariant distributions.
For more detailed description see \cite{AGS}, section 2.

\begin{definition}
Let $G$ be a reductive group. By an \textbf{admissible
representation of} $G$ we mean an admissible smooth \Fre
representation of $G(\rr).$
\end{definition}

We now introduce three notions of Gelfand pair.

\begin{definition}\label{GPs}
Let $H \subset G$ be a pair of reductive groups.
\begin{itemize}
\item We say that $(G,H)$ satisfy {\bf GP1} if for any irreducible
admissible smooth \Fre representation $(\pi,E)$ of $G$ we have
$$\dim Hom_{H(\rr)}(E,\cc) \leq 1$$

\item We say that $(G,H)$ satisfy {\bf GP2} if for any irreducible
admissible smooth \Fre representation $(\pi,E)$ of $G$ we have
$$\dim Hom_{H(\rr)}(E,\cc) \cdot \dim Hom_{H(\rr)}(\widetilde{E},\cc)\leq
1$$

\item We say that $(G,H)$ satisfy {\bf GP3} if for any irreducible
{\bf unitary} representation $(\pi,\mathcal{H})$ of $G(\rr)$ on a
Hilbert space $\mathcal{H}$ we have
$$\dim Hom_{H(\rr)}(\mathcal{H}^{\infty},\cc) \leq 1.$$
\end{itemize}

\end{definition}
Property GP1 was established by Gelfand and Kazhdan in certain
$p$-adic cases (see \cite{GK}). Property GP2 was introduced in
\cite{Gross} in the $p$-adic setting. Property GP3 was studied
extensively by various authors under the name {\bf generalized
Gelfand pair} both in the real and $p$-adic settings (see
e.g.\cite{vD,Bos-vD}).

We have the following straightforward proposition.

\begin{proposition}
$GP1 \Rightarrow GP2 \Rightarrow GP3.$
\end{proposition}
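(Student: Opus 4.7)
The plan is to handle the two implications separately, both of which follow from standard facts about admissible representations.

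For GP1 $\Rightarrow$ GP2, I would first note that the contragredient $(\widetilde{\pi}, \widetilde{E})$ of an irreducible admissible smooth \Fre representation is again an irreducible admissible smooth \Fre representation. Applying GP1 to $E$ gives $\dim Hom_{H(\R)}(E, \C) \leq 1$, and applying it to $\widetilde{E}$ gives $\dim Hom_{H(\R)}(\widetilde{E}, \C) \leq 1$. Taking the product yields GP2. This direction is essentially formal.

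For GP2 $\Rightarrow$ GP3, let $(\pi, \hh)$ be an irreducible unitary representation of $G(\R)$ on a Hilbert space. The plan rests on three standard ingredients. First, the subspace $\hh^\infty$ of smooth vectors is an irreducible admissible smooth \Fre representation of $G(\R)$, so that GP2 applies to it. Second, the Hermitian inner product on $\hh$ induces an identification of the smooth contragredient $\widetilde{\hh^\infty}$ with the complex-conjugate representation $\overline{\hh^\infty}$. Third, this identification yields a canonical antilinear isomorphism $Hom_{H(\R)}(\widetilde{\hh^\infty}, \C) \cong \overline{Hom_{H(\R)}(\hh^\infty, \C)}$, so in particular the two spaces have the same complex dimension. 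Substituting into GP2 gives
\[
\bigl(\dim Hom_{H(\R)}(\hh^\infty, \C)\bigr)^2 \;\leq\; 1,
\]
from which GP3 follows.

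The only genuinely non-trivial input is the identification in the second ingredient, together with the fact that $\hh^\infty$ is admissible and irreducible in the smooth \Fre sense, but these are standard consequences of Harish-Chandra's theory and can be quoted directly from \cite{AGS}, Section 2. There is no real obstacle; the proposition is a packaging result that translates the analytic statement GP3 into the algebraic statement GP1 controlled by invariant distributions, which is the form that the Gelfand--Kazhdan method actually attacks in the body of the paper.
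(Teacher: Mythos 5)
The paper states this proposition without proof, calling it ``straightforward,'' so there is no in-paper argument to compare against. Your proof is the standard one and is correct: GP1 $\Rightarrow$ GP2 follows because $\widetilde E$ is again irreducible admissible smooth \Fre and GP1 applies to it as well, and GP2 $\Rightarrow$ GP3 follows because the Hermitian form on $\hh$ identifies $\widetilde{\hh^\infty}$ with the conjugate representation, forcing $\dim\Hom_{H(\rr)}(\hh^\infty,\cc)^2\leq 1$. This is exactly the reasoning the authors are implicitly invoking.
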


We will use the following theorem from \cite{AGS} which is a
version of a classical theorem of Gelfand and Kazhdan.

\begin{theorem}\label{DistCrit}
Let $H \subset G$ be reductive groups and let $\tau$ be an
involutive anti-automorphism of $G$ and assume that $\tau(H)=H$.
Suppose $\tau(\xi)=\xi$ for all bi $H(\rr)$-invariant
distributions $\xi$ on $G(\rr)$. Then $(G,H)$ satisfies GP2.
\end{theorem}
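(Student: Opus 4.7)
My plan is to follow the classical Gelfand-Kazhdan strategy, adapted to the smooth \Fre setting. Let $(\pi, E)$ be an irreducible admissible smooth \Fre representation of $G$. If either $\Hom_{H(\rr)}(E, \cc)$ or $\Hom_{H(\rr)}(\widetilde{E}, \cc)$ is zero, the conclusion is trivial, so I assume both are nonzero; it then suffices to show that each is one-dimensional, since that already gives GP2.

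The first key step is to construct, for each pair of nonzero functionals $\ell \in \Hom_{H(\rr)}(E, \cc)$ and $\ell' \in \Hom_{H(\rr)}(\widetilde{E}, \cc)$, a bi-$H(\rr)$-invariant Schwartz distribution $T_{\ell, \ell'} \in \Sc^*(G(\rr))$. The idea is to exploit matrix coefficients: maps of the form $g \mapsto \ell(\pi(g)v)$ and $g \mapsto \ell'(\widetilde{\pi}(g)\widetilde v)$ combine, via the nuclear operator $\pi(f)$ attached to a test function $f \in \Sc(G(\rr))$, into a bi-$H(\rr)$-invariant pairing with test functions. The critical property, following from irreducibility of $\pi$ (equivalently, that matrix coefficients of an irreducible admissible smooth \Fre representation separate vectors), is that the assignment $(\ell, \ell') \mapsto T_{\ell, \ell'}$ is injective in each argument separately: $T_{\ell, \ell'} = 0$ with $\ell'\neq 0$ forces $\ell=0$, and symmetrically.

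The second step is to apply $\tau$. Since $\tau$ is an anti-involution with $\tau(H) = H$, pushforward by $\tau$ preserves bi-$H(\rr)$-invariance, and the hypothesis of the theorem gives $\tau_* T_{\ell, \ell'} = T_{\ell, \ell'}$. A direct computation on matrix coefficients, using the identity $m_{v,\widetilde v}(\tau(g)) = m_{\widetilde v, v}^{\widetilde\pi}(g)$ under the natural identification of $(\pi \circ \tau)$ with $\widetilde\pi$, shows that $\tau_* T_{\ell, \ell'}$ coincides with the analogous distribution $T'_{\ell', \ell}$ built from $\widetilde\pi$ with the roles of $\ell$ and $\ell'$ exchanged between the $\Hom$-slots. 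The resulting identity $T_{\ell, \ell'} = T'_{\ell', \ell}$, combined with injectivity, forces both $\Hom$-spaces to be one-dimensional: two linearly independent $\ell_1, \ell_2 \in \Hom_{H(\rr)}(E, \cc)$ paired with a fixed nonzero $\ell'$ would produce linearly independent distributions $T_{\ell_i, \ell'}$ which nonetheless, via the swap, must correspond to distributions for $\widetilde\pi$ depending on $\ell_i$ only through the single $E$-slot of $\widetilde\pi$, a contradiction.

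The main obstacle is purely technical: executing the construction of $T_{\ell, \ell'}$ and verifying the separation-of-vectors / injectivity input in the archimedean smooth \Fre setting, where $\pi(f)$ is no longer finite-rank and one must juggle topologies, Schwartz distributions on Nash manifolds, and the Casselman-Wallach globalization. In the $p$-adic case, where everything is algebraic, this is essentially formal (the original Gelfand-Kazhdan argument); in our setting, precisely this transfer is the content of \cite{AGS}, so in practice I would reduce the proof to quoting the machinery developed there.
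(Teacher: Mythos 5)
The paper itself does not prove Theorem \ref{DistCrit}: it is imported verbatim from \cite{AGS}, with the explicit remark ``We will use the following theorem from \cite{AGS} which is a version of a classical theorem of Gelfand and Kazhdan.'' So there is no internal proof to check your attempt against; you are in effect reconstructing the argument of a cited result, and your closing sentence (``I would reduce the proof to quoting the machinery developed there'') is in fact exactly what the paper does.

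On the merits of your sketch: the overall template is the right one. Constructing a bi-$H(\rr)$-invariant Schwartz distribution $T_{\ell,\ell'}$ from $\ell\in\Hom_{H(\rr)}(E,\cc)$ and $\ell'\in\Hom_{H(\rr)}(\widetilde E,\cc)$ via $T_{\ell,\ell'}(f)=\ell'(\ell\circ\pi(f))$, checking bi-$H$-invariance, invoking the separation of vectors by matrix coefficients to get injectivity in each slot, and pushing by $\tau$ to get $T_{\ell,\ell'}=\tau_*T_{\ell,\ell'}$ --- all of this is the correct Gelfand--Kazhdan set-up, and you correctly flag that the archimedean technicalities (Schwartz distributions on Nash manifolds, the Casselman--Wallach globalization, the behaviour of $\pi(f)$ for $f\in\Sc(G(\rr))$) are precisely the content supplied by \cite{AGS}.

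However, the final deduction step as you phrase it does not close. You claim that linearly independent $\ell_1,\ell_2\in\Hom_{H(\rr)}(E,\cc)$, paired with a fixed $\ell'\neq 0$, yield linearly independent $T_{\ell_i,\ell'}$ which ``via the swap, must correspond to distributions for $\widetilde\pi$ depending on $\ell_i$ only through the single $E$-slot of $\widetilde\pi$, a contradiction.'' But after the swap $T_{\ell_i,\ell'}=T'_{\ell',\ell_i}$ the parameter $\ell_i$ still occurs as a genuine (second-slot) argument of $T'$, and $T'$ is injective in that slot by the very separation-of-vectors fact you invoke --- so $T'_{\ell',\ell_1}$ and $T'_{\ell',\ell_2}$ are just as independent as before, and no contradiction arises. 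Extracting the one-dimensionality from the identity $T_{\ell,\ell'}=\tau_*T_{\ell,\ell'}$ requires an additional idea (e.g.\ recovering $\ell$ and $\ell'$ individually, up to scalar, from the distribution $T_{\ell,\ell'}$, and using the resulting symmetry of the pairing; or, as is actually done in \cite{AGS}, proving the criterion first for irreducible unitary representations and then transferring to the smooth \Fre setting). As a blind reconstruction your sketch lands on the right classical template, but the crucial linear-algebra finish is not correct as written.
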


In our case GP2 is equivalent to GP1 by the following proposition.

\begin{proposition}
Suppose $H \subset \mathrm{GL}_{n}$ is transpose invariant
subgroup. Then $GP1$ is equivalent to $GP2$ for the pair
$(\mathrm{GL}_{n},H)$.
\end{proposition}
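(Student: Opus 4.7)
The implication $\mathrm{GP1} \Rightarrow \mathrm{GP2}$ is immediate from the definition: if each factor in the product $\dim \Hom_{H(\rr)}(E,\cc) \cdot \dim \Hom_{H(\rr)}(\widetilde{E},\cc)$ is at most $1$, then so is the product. Thus the content of the proposition is the reverse implication $\mathrm{GP2} \Rightarrow \mathrm{GP1}$, for which I plan to show that, under the transpose-invariance hypothesis on $H$, the two factors in the product always coincide.

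The plan is to exploit the Gelfand--Kazhdan involution $\theta: g \mapsto (g^T)^{-1}$ on $\mathrm{GL}_n$. A classical theorem of Gelfand and Kazhdan, in its archimedean form (see the discussion in \cite{AGS}), asserts that for any irreducible admissible smooth \Fre representation $(\pi, E)$ of $\mathrm{GL}_n(\rr)$, the contragredient $\widetilde{E}$ is isomorphic to the twisted representation $E^\theta$ defined by $\pi^\theta(g) = \pi(\theta(g))$. This is precisely the content that one deduces, via Theorem \ref{DistCrit} applied to $G = \mathrm{GL}_n$, $H = \mathrm{GL}_n$ (viewed diagonally), and $\tau = $ transpose: the full pair $(\mathrm{GL}_n, \mathrm{GL}_n)$ satisfies GP2 with the transpose as the required anti-automorphism.

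Given this, I would argue as follows. Assume $H \subset \mathrm{GL}_n$ is transpose-invariant; since $H$ is a subgroup, it is also closed under inversion, hence $\theta(H) = H$. For any irreducible admissible smooth \Fre representation $(\pi,E)$ of $\mathrm{GL}_n(\rr)$, pulling back an $H$-invariant functional via $\theta$ yields the chain of isomorphisms
\[
\Hom_{H(\rr)}(\widetilde{E},\cc) \cong \Hom_{H(\rr)}(E^\theta, \cc) \cong \Hom_{\theta(H)(\rr)}(E,\cc) = \Hom_{H(\rr)}(E,\cc),
\]
where the middle isomorphism sends $\ell$ to $\ell \circ \theta^{-1}$ on the underlying space. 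Consequently $\dim \Hom_{H(\rr)}(\widetilde{E},\cc) = \dim \Hom_{H(\rr)}(E,\cc)$, and GP2 forces $\bigl(\dim \Hom_{H(\rr)}(E,\cc)\bigr)^2 \leq 1$, giving GP1.

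The only potentially delicate point is invoking the archimedean Gelfand--Kazhdan identification $\widetilde{E} \cong E^\theta$ in the smooth \Fre category rather than the Harish-Chandra module category; but this is precisely the form packaged in \cite{AGS} via Theorem \ref{DistCrit}, so no additional work is required beyond citing it. Everything else is a formal manipulation of invariant functionals.
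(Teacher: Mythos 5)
Your overall strategy is correct and almost certainly matches the proof in \cite{AGS}, Proposition 2.4.1 (the paper itself only cites that reference and gives no argument). The core is: the archimedean Gelfand--Kazhdan contragredient theorem for $GL_n$ gives $\widetilde{E}\cong E^{\theta}$ for $\theta(g)=(g^T)^{-1}$; then transpose-invariance of $H$ (together with closure under inversion) gives $\theta(H)=H$, so $\dim\Hom_{H(\rr)}(\widetilde{E},\cc)=\dim\Hom_{H(\rr)}(E,\cc)$, and GP2 becomes $(\dim\Hom_{H(\rr)}(E,\cc))^2\leq 1$, which is GP1. The chain of isomorphisms and the endgame are all fine.

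The one misstep is the attribution of $\widetilde{E}\cong E^{\theta}$ to Theorem \ref{DistCrit}. That theorem is a multiplicity-bound machine: it converts a statement about $\tau$-invariance of bi-$H$-invariant distributions into GP2 for $(G,H)$, i.e.\ into an upper bound $\dim\Hom(E,\cc)\cdot\dim\Hom(\widetilde{E},\cc)\leq 1$. It never produces a nonvanishing or an isomorphism, so it cannot by itself yield $\widetilde{E}\cong E^{\theta}$ (indeed, applying it to $(GL_n\times GL_n,\Delta GL_n)$ with the appropriate $\tau$ only recovers uniqueness of the invariant pairing, not its existence against $E^\theta$). The contragredient theorem is a distinct classical input, typically proved via characters: conjugation-invariant distributions on $GL_n$ are transpose-invariant, so the Harish-Chandra character of $E$ is transpose-invariant, forcing $\chi_{E^\theta}=\chi_{\widetilde{E}}$ and hence $E^\theta\cong\widetilde{E}$. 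If you simply cite the archimedean contragredient theorem directly instead of trying to extract it from Theorem \ref{DistCrit}, the proof is complete and agrees with the standard approach.
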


For proof see \cite{AGS}, proposition 2.4.1.

\begin{corollary}
Theorem A implies Theorem B.
\end{corollary}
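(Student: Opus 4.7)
The plan is to invoke the Gelfand--Kazhdan type criterion (Theorem \ref{DistCrit}) with the anti-involution $\tau$ given by matrix transposition, then convert GP2 to GP1 via the preceding proposition. Concretely, set $G = GL(V)$, $H = Sp(V,\omega)$, and fix a symplectic basis so that $J = \begin{pmatrix} 0 & I_n \\ -I_n & 0 \end{pmatrix}$ represents $\omega$; let $\tau(g) = g^t$ be transposition with respect to this basis.

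First I would verify the hypotheses of Theorem \ref{DistCrit}. Clearly $\tau$ is an involutive anti-automorphism of $G$. To check $\tau(H) = H$, note that $g \in Sp(V,\omega)$ iff $g^t J g = J$; since $J^{-1} = -J$, this identity is equivalent to $g J g^t = J$, which says $g^t \in Sp(V,\omega)$. Hence $Sp(V,\omega)$ is transpose-stable.

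Next I would identify bi-$H(\rr)$-invariant distributions on $G(\rr)$ with $Sp(V,\omega) \times Sp(V,\omega)$-invariant distributions for the two-sided translation action (left by $h_1$, right by $h_2^{-1}$), which is precisely the action considered in Theorem A. Thus Theorem A says $\tau(\xi) = \xi$ for every such $\xi$. Applying Theorem \ref{DistCrit} then yields that $(G,H)$ satisfies GP2.

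Finally, since $H = Sp(V,\omega) \subset GL_{2n}$ is a transpose-invariant subgroup (as just verified), the proposition recalled above (\cite{AGS}, Proposition 2.4.1) upgrades GP2 to GP1 for the pair $(GL_{2n}, Sp_{2n})$, which is exactly the statement of Theorem B. No step here poses a serious obstacle: the only mild point requiring care is the elementary linear-algebra check that $Sp$ is transpose-invariant in this realization; everything else is a direct citation of results already assembled in the preliminaries.
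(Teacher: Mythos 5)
Your proof is correct and follows exactly the route the paper intends: the corollary is stated immediately after Theorem \ref{DistCrit} and the proposition from \cite{AGS} precisely so that the deduction is the one you wrote out. The only detail the paper leaves implicit is the transpose-stability of $Sp(V,\omega)$, which you verify correctly.
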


\subsection{Symmetric pairs}
$ $\\
In this subsection we review some tools developed in \cite{AG_HC}
that enable to prove that, granting certain hypothesis, a
symmetric pair is a Gelfand pair.

\begin{definition}
A \textbf{symmetric pair} is a triple $(G,H,\theta)$ where
$H\subset G$ are reductive groups, and $\theta$ is an involution
of $G$ such that $H = G^{\theta}$. In cases when there is no
ambiguity we will omit ${\theta}$

For a symmetric pair $(G,H,\theta)$ we define an anti-involution
$\sigma :G \to G$ by $\sigma(g):=\theta(g^{-1})$, denote $\g:=Lie
G$, $\h := LieH$, $\gd:=\{a \in \g | \theta(a)=-a\}$. Note that
$H$ acts on $\gd$ by the adjoint action. Denote also
$G^{\sigma}:=\{g \in G| \sigma(g)=g\}$ and define a
\textbf{symmetrization map} $s:G(\rr) \to G^{\sigma}(\rr)$ by
$s(g):=g\sigma(g)$.
\end{definition}
The following lemma is standard:
\begin{lemma}\label{symmetrization_is_open}
The symmetrization map $s:G \to G^{\sigma}$ is submersive and
hence open.
\end{lemma}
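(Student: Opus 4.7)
The plan is to reduce the claim to a single tangent space computation at the identity by exploiting a natural $G$-equivariance of $s$. First I would observe that $G$ acts on $G^{\sigma}$ by the twisted rule $g_0 \cdot h := g_0 h \sigma(g_0)$, which preserves $G^{\sigma}$: if $\sigma(h)=h$ then $\sigma(g_0 h \sigma(g_0)) = \sigma(\sigma(g_0))\sigma(h)\sigma(g_0) = g_0 h \sigma(g_0)$, using $\sigma^2 = \id$ and the fact that $\sigma$ is an anti-involution. The map $s$ intertwines left multiplication on $G$ with this action:
\[ s(g_0 g) = g_0 g \sigma(g)\sigma(g_0) = g_0 s(g)\sigma(g_0) = g_0\cdot s(g). \]
Differentiating gives $(ds)_{g_0 g}\circ (dL_{g_0})_g = (d\phi_{g_0})_{s(g)} \circ (ds)_g$, where the two outer maps are linear isomorphisms. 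Since left multiplication is transitive on $G$, submersivity at any one point propagates to every $g \in G$, so it suffices to verify submersivity at $e$.

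Next I would compute $(ds)_e$ directly. Writing $\sigma(g)=\theta(g^{-1})$ yields $\sigma_*(X) = -\theta(X)$ at the identity, so for $X \in \g = T_e G$,
\[ (ds)_e(X) = \frac{d}{dt}\bigg|_{t=0} \exp(tX)\exp(-t\theta(X)) = X - \theta(X). \]
Since $\theta$ is an involution of $\g$ with $\gd$ as its $(-1)$-eigenspace, the image of $(ds)_e$ is exactly $\gd$, because $Y = \tfrac{1}{2}(Y-\theta(Y))$ for any $Y\in\gd$.

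Finally, I would identify the target tangent space. As the fixed-point set of the smooth involutive diffeomorphism $\sigma$ of $G$, the subset $G^{\sigma}$ is a smooth Nash submanifold, and $T_e G^{\sigma}$ equals the $(+1)$-eigenspace of $(d\sigma)_e$, which is again $\gd$. Therefore $(ds)_e\colon \g \to \gd = T_e G^{\sigma}$ is surjective, yielding submersivity at $e$ and hence (by the equivariance above) at every $g \in G$; openness of $s$ follows immediately. The only step requiring care is the identification of $G^{\sigma}$ as a smooth submanifold with tangent space $\gd$, but this is the standard fact about fixed-point sets of smooth involutions applied to $\sigma$.
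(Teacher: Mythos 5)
Your proof is correct, and since the paper labels the lemma ``standard'' and omits the argument, there is no paper proof to compare against; your argument is indeed the standard one. The reduction to the identity via the twisted $G$-action $g_0\cdot h = g_0 h\,\sigma(g_0)$ on $G^\sigma$, the computation $(ds)_e(X)=X-\theta(X)$ with image $\gd$, and the identification $T_eG^\sigma=\gd$ as the fixed set of $(d\sigma)_e=-\theta$ are all carried out correctly, and the fixed-point-set-of-an-involution fact you invoke to know $G^\sigma$ is a smooth submanifold is the right justification for the final identification.
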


\begin{definition}
Let $(G_1,H_1,\theta_1)$ and $(G_2,H_2,\theta_2)$ be symmetric
pairs. We define their \textbf{product} to be the symmetric pair
$(G_1 \times G_2,H_1 \times H_2,\theta_1 \times \theta_2)$.
\end{definition}

\begin{definition}
We call a symmetric pair $(G,H,\theta)$ \textbf{good} if for any
closed $H(\rr) \times H(\rr)$ orbit $O \subset G(\rr)$, we have
$\sigma(O)=O$.
\end{definition}

\begin{definition}
We say that a symmetric pair $(G,H,\theta)$ is a \textbf{GK pair}
if any $H(\rr) \times H(\rr)$ - invariant distribution on $G(\rr)$
is $\sigma$ - invariant.
\end{definition}

\begin{definition}
We define an involution $\theta: GL_{2n} \to GL_{2n}$ by
$\theta(x)=J x^t J^{-1}$ where $J = \begin{pmatrix}
  0_{n} & Id_{n} \\
  -Id_{n} & 0_{n}
\end{pmatrix}$. Note that $(GL_{2n},Sp_{2n},\theta)$ is a symmetric pair.
\end{definition}

Theorem A can be rephrased in the following way:

\begin{theoremm1*}
The pair $(GL_{2n},Sp_{2n})$ defined over $\rr$ is a GK pair.
\end{theoremm1*}

\subsubsection{Descendants of symmetric pairs}
\begin{proposition} \label{PropDescend}
Let  $(G,H,\theta)$ be a symmetric pair. Let $g \in G(\rr)$ such
that $HgH$ is closed. Let $x=s(g)$. Then $x$ is semisimple.
\end{proposition}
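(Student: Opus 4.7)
\emph{Plan.} I will reduce via the symmetrization map $s\colon G\to G^{\sigma}$ to the analogous assertion for $\mathrm{Ad}(H)$-orbits on $G^{\sigma}$ --- namely, that a closed orbit consists of semisimple elements --- and then prove the latter using the Jordan decomposition and the Kostant-Rallis theorem (the $\mathbb{Z}/2$-graded version of Jacobson-Morozov).

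The first ingredient is the equivariance
\[
s(h_1 g h_2)=h_1 \cdot s(g) \cdot h_1^{-1},\qquad h_1,h_2\in H,
\]
which follows from $\sigma(h)=\theta(h^{-1})=h^{-1}$ for $h\in H$. Together with the fiber computation $s^{-1}(s(g))=gH$ (if $g'\sigma(g')=g\sigma(g)$, then $h:=g^{-1}g'$ satisfies $\theta(h)=h$), this gives $s^{-1}(\mathrm{Ad}(H)(x))=HgH$ and $s(HgH)=\mathrm{Ad}(H)(x)$. Combined with Lemma~\ref{symmetrization_is_open} (openness of $s$), closedness of $HgH$ in $G$ translates into closedness of the $\mathrm{Ad}(H)$-orbit of $x$ in $G^{\sigma}$.

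For the orbit statement, let $x=x_s x_u$ be the Jordan decomposition in $G$. Applying the anti-involution $\sigma$ to $\sigma(x)=x$, we get that $\sigma(x_u)\sigma(x_s)$ is another Jordan decomposition of $x$, so by uniqueness $\sigma(x_s)=x_s$ and $\sigma(x_u)=x_u$; writing $x_u=\exp(n)$ and using $d\sigma=-\theta$, one obtains $n\in\gd$ nilpotent. Assume for contradiction $x_u\neq 1$. The centralizer $Z_G(x_s)$ is reductive, and since $\theta(x_s)=x_s^{-1}$ it is $\theta$-stable; applying Kostant-Rallis inside it embeds $n$ into an $\mathfrak{sl}_2$-triple $(n,h,f)$ with $h\in\h$, $f\in\gd$, all commuting with $x_s$. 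Setting $\lambda(t)=\exp(th)\in H$ and using $[h,n]=2n$,
\[
\mathrm{Ad}(\lambda(t))(x)=x_s\cdot \exp(e^{2t}n)\xrightarrow[t\to -\infty]{} x_s.
\]
Since $x$ is non-semisimple while $x_s$ is semisimple, they lie in distinct $G$-conjugacy classes and hence in distinct $\mathrm{Ad}(H)$-orbits; thus $x_s\in\overline{\mathrm{Ad}(H)(x)}\setminus\mathrm{Ad}(H)(x)$, contradicting closedness of the orbit.

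The main obstacle is the transfer of closedness in the first step: a naive lift $h_n g\in HgH$ of a convergent sequence $h_n x h_n^{-1}\to y$ may fail to converge, since $\{h_n\}$ can escape to infinity, and moreover $s(G)$ is only an open (not closed) subset of $G^{\sigma}$. I plan to handle this by exploiting the submersiveness of $s$ to choose a local Nash section of $s$ near $y$, modifying the lifts $h_n g$ by suitable elements of $H$ so that they land on this section, and then extracting a convergent subsequence whose limit in $\overline{HgH}=HgH$ projects to $y\in s(HgH)=\mathrm{Ad}(H)(x)$.
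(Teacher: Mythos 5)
The paper gives no proof of this proposition; it simply cites \cite{AG_HC}, Proposition~7.2.1. So the comparison here is against the correctness of your argument on its own terms.

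Your overall strategy — pass through the symmetrization $s$ to an $\mathrm{Ad}(H)$-orbit on $G^{\sigma}$, then use the multiplicative Jordan decomposition together with a Kostant--Rallis $\mathfrak{sl}_2$-triple $(n,h,f)$ with $h\in\h$ to contract $x$ to $x_s$ — is sound and is the right kind of argument for this statement. The equivariance $s(h_1gh_2)=h_1 s(g) h_1^{-1}$ and the fiber identification $s^{-1}(s(g))=gH(\rr)$ are correct. The issue is exactly where you yourself flag it: the transfer of closedness. Your proposed repair via a local Nash section of $s$ near $y$ presupposes $y\in s(G(\rr))$, but the only information you have is that $y$ is a limit of points of the open set $s(G(\rr))$; since $s(G(\rr))$ need not be closed in $G^{\sigma}(\rr)$, there is a priori nothing forcing $y$ to lie in the image, and if it does not, there is no local section defined near $y$. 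As written, the plan does not close this hole, and in particular it does not establish the blanket claim that $\mathrm{Ad}(H)(x)$ is closed in $G^{\sigma}(\rr)$.

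Fortunately the gap is local and can be filled for the one boundary point you actually need. Writing $x=x_s\exp(n)$ with $n\in\gd$, $[x_s,n]=0$, and using $d\sigma(n)=-d\theta(n)=n$, a direct computation gives
\[
s\bigl(\exp(-n/2)\,g\bigr)=\exp(-n/2)\,g\,\sigma(g)\,\exp(-n/2)=\exp(-n/2)\,x_s\exp(n)\,\exp(-n/2)=x_s,
\]
so $x_s\in s(G(\rr))$ with the explicit preimage $g':=\exp(-n/2)g$. With this in hand, your lifting argument goes through along your curve $\gamma(t)=\exp(-th)g\in H(\rr)g\subset HgH$: the points $\gamma(t)H(\rr)$ converge in $G(\rr)/H(\rr)$ to $g'H(\rr)$ because $\bar s\colon G(\rr)/H(\rr)\to s(G(\rr))$ is an open embedding and $s(\gamma(t))\to x_s=\bar s(g'H(\rr))$; a local section of the bundle $G(\rr)\to G(\rr)/H(\rr)$ near $g'H(\rr)$ produces bounded modifications $\gamma(t)k(t)\in HgH$ converging to some $g'k_0$, and closedness of $HgH$ gives $g'k_0\in HgH$, hence $x_s=s(g'k_0)\in s(HgH)=\mathrm{Ad}(H)(x)$, which is impossible since $x_s$ and $x$ are not $G$-conjugate. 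In short: keep your argument, but replace the general closedness-transfer claim with the pointwise verification $x_s\in s(G(\rr))$; that is the ingredient your plan is missing.
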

For proof see e.g. \cite{AG_HC}, Proposition 7.2.1.
\begin{definition}
In the notations of the previous proposition we will say that the
pair $(G_x,H_x,\theta|_{G_x})$ is a descendant of $(G,H,\theta)$.
Here $G_{x}$ (and similarly for $H$) denotes the stabilizer of $x$
in $G$.
\end{definition}

\subsubsection{Regular symmetric pairs}
\begin{notation}
Let $V$ be an algebraic finite dimensional representation over
$\rr$ of a reductive group $G$. Denote $Q(V):=V/V^G$. Since $G$ is
reductive, there is a canonical embedding $Q(V) \hookrightarrow
V$.
\end{notation}

\begin{notation}
Let $(G,H,\theta)$ be a symmetric pair. We denote by
$\mathcal{N}_{G,H}$ the subset of all the nilpotent elements in
$Q(\gd)$. Denote $R_{G,H}:=Q(\gd) - \mathcal{N}_{G,H}$.
\end{notation}
Our notion of $R_{G,H}$ coincides with the notion $R(\gd)$ used in
\cite{AG_HC}, Notation 2.1.10. This follows from Lemma 7.1.11 in
\cite{AG_HC}.
\begin{definition}
Let $\pi$ be an action of a real reductive group $G$ on a smooth
affine variety $X$.
We say that an algebraic automorphism $\tau$ of $X$ is \textbf{$G$-admissible} if \\
(i) $\pi(G(\rr))$ is of index at most 2 in the group of
automorphisms of $X$
generated by $\pi(G(\rr))$ and $\tau$.\\
(ii) For any closed $G(\rr)$ orbit $O \subset X(\rr)$, we have
$\tau(O)=O$.
\end{definition}

\begin{definition}
Let $(G,H,\theta)$ be a symmetric pair. We call an element $g \in G(\rr)$ \textbf{admissible} if\\
(i) $Ad(g)$ commutes with $\theta$ (or, equivalently, $s(g)\in Z(G)$) and \\
(ii) $Ad(g)|_{\g^{\sigma}}$ is $H$-admissible.
\end{definition}

\begin{definition}
We call a symmetric pair $(G,H,\theta)$ \textbf{regular} if for
any admissible $g \in G(\rr)$ such that every $H(\rr)$-invariant
distribution on $R_{G,H}$ is also $Ad(g)$-invariant,
we have\\
(*) every $H(\rr)$-invariant distribution on $Q(\gd)$ is also
$Ad(g)$-invariant.
\end{definition}

Clearly, the product of regular pairs is regular (see
\cite{AG_HC}, Proposition 7.4.4).

We will deduce Theorem A' (and hence Theorem A) from the following
Theorem:

\begin{theoremmm*}
The pair $(GL_{2n},Sp_{2n})$ defined over $\rr$ is regular.
\end{theoremmm*}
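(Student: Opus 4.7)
The plan is to deduce Theorem C from Theorem \ref{maintrick} by isolating the contribution of the unique distinguished nilpotent orbit.

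Fix an admissible $g \in GL_{2n}(\rr)$ for which every $Sp_{2n}$-invariant distribution on $R_{G,H}$ is $Ad(g)$-invariant, let $\xi$ be any $Sp_{2n}$-invariant distribution on $Q(\gd)$, and set $\eta := \xi - (Ad\, g)^*\xi$. Then $\eta$ is equivariant for the group $\tilde H := \langle Sp_{2n}, Ad(g)\rangle$ with the character $\chi$ that is trivial on $Sp_{2n}$ and $-1$ on $Ad(g)$, and by hypothesis $\eta|_{R_{G,H}} = 0$, so $\Supp(\eta) \subset \mathcal{N}_{G,H}$. The trace form $(X,Y) \mapsto \tr(XY)$ descends to a non-degenerate symmetric bilinear form $B$ on $Q(\gd)$ which is both $Sp_{2n}$-invariant and $Ad(g)$-invariant (since conjugation preserves trace), so $\Fou_B$ commutes with $Ad(g)$. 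Applying the hypothesis to the $Sp_{2n}$-invariant distribution $\Fou_B(\xi)$ (whose invariance is guaranteed by the trivial lemma of Section 1.1) then gives $\Supp(\Fou_B(\eta)) \subset \mathcal{N}_{G,H}$ as well. The goal is now to show $\eta = 0$.

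Next I would classify the nilpotent $Sp_{2n}$-orbits on $\gd$ using the method of \cite{GG}: after a Cayley-type twist these are in bijection with nilpotent $GL_n$-orbits on $gl_n$, which are parametrized by partitions of $n$. A dimension count identifies the orbit $\oo$ corresponding to the regular partition $(n)$ as the unique distinguished orbit and shows that it is open in $\mathcal{N}_{G,H}$. Set $\mathcal{N}' := \mathcal{N}_{G,H} \setminus \oo$ (closed in $\mathcal{N}_{G,H}$) and $U := Q(\gd) \setminus \mathcal{N}'$ (open in $Q(\gd)$), so that $\oo$ is closed in $U$ and $\eta|_U$ is a $(\tilde H, \chi)$-equivariant distribution supported on $\oo$. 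By Proposition \ref{Strat} applied to the one-stratum stratification $\oo \subset U$, showing $\eta|_U = 0$ reduces to verifying $\Sc^*(\oo, Sym^k(CN_\oo^U))^{\tilde H, \chi} = 0$ for every $k \geq 0$. Frobenius reciprocity (Theorem \ref{Frob}) then translates this into a statement at a single point $x \in \oo$ about $(\tilde H)_x$-equivariant vectors in $Sym^k(CN_\oo^U)|_x$ transforming by $\chi$. Using the explicit description of the stabilizer $(Sp_{2n})_x$ coming from the centralizer of a regular nilpotent in $GL_n$, combined with admissibility of $g$, one should verify that every stabilizer-invariant jet is automatically $Ad(g)$-invariant, i.e.\ that the $\chi$-isotypic part vanishes.

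Once $\eta|_U = 0$ is established, $\Supp(\eta) \subset \mathcal{N}'$, and the same argument applied to $\Fou_B(\eta)$ gives $\Supp(\Fou_B(\eta)) \subset \mathcal{N}'$; Theorem \ref{maintrick} then yields $\eta = 0$, proving regularity. The principal obstacle I anticipate is the pointwise Frobenius computation at $\oo$: one must pin down the stabilizer of a distinguished nilpotent element, describe the $Ad(g)$-action on jets of the conormal bundle, and force every stabilizer-invariant jet to be $Ad(g)$-invariant. This step may itself need Proposition \ref{PropDescend} to reduce to smaller descendant symmetric pairs whose regularity follows from the simpler criteria of \cite{AG_HC}; in any case, all the genuinely new difficulty of the present paper is packaged into Theorem \ref{maintrick}, and section 3 is essentially a bookkeeping argument around it.
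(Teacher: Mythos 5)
Your overall strategy matches the paper's exactly: form $\eta = \xi - (Ad\,g)^*\xi$, use the regularity hypothesis to push $\Supp(\eta)$ and $\Supp(\Fou_B(\eta))$ into $\mathcal{N}_{G,H}$, rule out the distinguished orbit $\oo$ by a stratification/conormal-bundle argument, and then kill what remains with Theorem \ref{maintrick}. The orbit classification via \cite{GG} (Cayley twist to nilpotent $GL_n$-orbits, regular partition $(n) \leftrightarrow$ unique open distinguished orbit) is also what the paper does in Proposition \ref{Hdistinguished}.

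The one step you flagged as ``the principal obstacle'' --- ruling out $(\widetilde H,\chi)$-equivariant sections of $Sym^k(CN_{\oo}^{Q(\gd)})$ --- is exactly where the paper has an idea you did not find, and your proposed route (explicit stabilizer of a regular nilpotent, explicit $Ad(g)$-action on jets, possibly a reduction to descendants via Proposition \ref{PropDescend}) would be substantially harder and is not what happens. The paper's trick is Proposition \ref{Submersive}: the ideal of $H$-invariants vanishing at $0$ is radical (Kostant--Rallis, \cite{KR}, Theorem 14), so the categorical-quotient map $\pi\colon \gd \to Spec(O(\gd))^H$ is submersive at every smooth point of $\mathcal{N}_{G,H}$, in particular along the open orbit $\oo$. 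Since $\oo$ is open in the fiber of $\pi$ through it, $d\pi$ identifies $N_{\oo}^{Q(\gd)}$ with the pullback of the tangent bundle of the base, and $\widetilde H = \langle Ad(H), Ad(g)\rangle$ acts \emph{trivially} on $Spec(O(\gd))^H$ (invariant polynomials are $H$-invariant by definition and $Ad(g)$-invariant because $Ad(g)$ is conjugation). Hence $Sym^k(CN_{\oo}^{Q(\gd)})$ is a \emph{trivial} $\widetilde H$-equivariant bundle, and the jet computation collapses: no explicit description of the stabilizer or of the $Ad(g)$-action on conormal jets is ever needed. Descendants do not enter the proof of Theorem C at all; they only appear in the deduction of Theorem A from Theorem C via Theorem \ref{GoodHerRegGK} and \cite{S}. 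So: correct architecture, correct use of Theorem \ref{maintrick}, but the crucial lemma making the ``bookkeeping'' actually work --- the Kostant--Rallis submersivity and the resulting $\widetilde H$-triviality of the conormal bundle --- is missing from your proposal.
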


The deduction is based on the following theorem (see \cite{AG_HC},
Theorem 7.4.5.):
\begin{theorem} \label{GoodHerRegGK}
Let $(G,H,\theta)$ be a good symmetric pair such that all its
descendants (including itself) are regular. Then it is a GK pair.
\end{theorem}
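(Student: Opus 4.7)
The plan is to show that every bi-$H(\rr)$-invariant distribution $\xi$ on $G(\rr)$ is $\sigma$-invariant. Setting $\eta := \xi - \sigma(\xi)$, it suffices to prove that any bi-$H(\rr)$-invariant distribution on $G(\rr)$ which is $\sigma$-anti-invariant (i.e.\ $\sigma(\eta) = -\eta$) must vanish. I would prove this by Harish-Chandra-style descent on the closed $H(\rr)\times H(\rr)$ orbits, stratifying $G(\rr)$ by these orbits and arguing by Noetherian induction on $\Supp(\eta)$.

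The base layer of the induction is handled by the hypothesis that the pair itself is regular, applied to $g = e$; here $Q(\gd)$ is identified with the slice at the identity orbit, and $R_{G,H}$ corresponds to the semisimple locus, on which $\eta$ vanishes by the inductive hypothesis because elements of $R_{G,H}$ give rise to strictly smaller descendant pairs. For the inductive step, I take a closed orbit $HgH \subset G(\rr)$; by Proposition \ref{PropDescend} the element $x = s(g)$ is semisimple, and by the goodness hypothesis $\sigma(HgH) = HgH$, so the skew-invariance condition for $\eta$ is intrinsic to each orbit. Applying the Luna slice theorem in its Nash version (or the Frobenius-type descent of Theorem \ref{Frob}) I identify an $H \times H$-saturated neighborhood of $HgH$ with a fiber bundle whose transversal slice has tangent space $\g_x^\sigma$, the $-1$-eigenspace of $\theta$ on the Lie algebra of the stabilizer $G_x$. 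Under this identification, bi-$H(\rr)$-invariant distributions near the orbit correspond to $H_x(\rr)$-invariant distributions on a neighborhood of $0$ in $\g_x^\sigma$, and the $\sigma$-anti-invariance of $\eta$ translates to an anti-equivariance under the linear automorphism $Ad(g)$ of $\g_x^\sigma$.

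Next I would verify that $g$ is \emph{admissible} with respect to the descendant pair $(G_x, H_x, \theta\rest_{G_x})$: by construction $s(g) = x \in Z(G_x)$, and closedness of $HgH$ together with the goodness hypothesis forces $Ad(g)\rest_{\g_x^\sigma}$ to preserve closed $H_x(\rr)$-orbits up to $H_x(\rr)$ action, giving $H_x$-admissibility. Now the smaller-support induction yields that the pulled-back $H_x(\rr)$-invariant distribution on $\g_x^\sigma$ is already $Ad(g)$-invariant on the complement of $\mathcal{N}_{G_x,H_x}$, i.e.\ on $R_{G_x, H_x}$; using Proposition \ref{Strat} and the lemma that Fourier transform preserves $H_x$-invariance, one combines this with a Fourier-and-homogeneity argument on the normal bundle strata to extend $Ad(g)$-invariance from $R_{G_x,H_x}$ to all of $Q(\g_x^\sigma)$. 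This extension is exactly what regularity of the descendant $(G_x, H_x, \theta\rest_{G_x})$ asserts, so the hypothesis delivers $Ad(g)$-invariance everywhere. Combined with the $Ad(g)$-anti-invariance coming from $\sigma(\eta) = -\eta$, the distribution must then vanish in a neighborhood of the orbit, closing the induction.

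The main obstacle is the bookkeeping around the slice: one must verify that the $\sigma$-anti-invariance condition transports cleanly through the Luna slice to the prescribed $Ad(g)$-anti-invariance on $\g_x^\sigma$, that $g$ indeed lands in the admissible locus of the descendant pair (this uses goodness in an essential way), and that the Noetherian induction on orbits is well-founded (e.g.\ by descending on $\dim G_x$, with the base case being the pair itself). Once these points are in place, the regularity hypothesis combined with the Fourier-theoretic stratification argument of Proposition \ref{Strat} closes the induction at every stratum and yields $\eta = 0$.
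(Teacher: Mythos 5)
The paper does not prove Theorem~\ref{GoodHerRegGK}; it is quoted verbatim from \cite{AG_HC}, Theorem~7.4.5, so there is no internal proof to compare against. Your sketch correctly reproduces the generalized Harish--Chandra descent of that reference: reduce to showing that a bi-$H(\rr)$-invariant, $\sigma$-anti-invariant distribution $\eta$ vanishes; use goodness to localize to a closed orbit $HgH$; pass, via a Luna-slice (Frobenius) argument at the semisimple element $x=s(g)$ given by Proposition~\ref{PropDescend}, to an $H_x(\rr)$-invariant distribution on $Q(\g_x^\sigma)$ anti-equivariant under $Ad(g)$; check admissibility of $g$ for the descendant (using goodness for the orbit-preservation condition and $x\in Z(G_x)$); obtain $Ad(g)$-invariance on $R_{G_x,H_x}$ from the inner descent and then invoke regularity of $(G_x,H_x,\theta\rest_{G_x})$ to propagate this to all of $Q(\g_x^\sigma)$; conclude $\eta=0$ near the orbit from invariance plus anti-invariance.

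One inaccuracy worth correcting: the identity-orbit ``base layer'' is not where regularity of $(G,H)$ itself intervenes, since for $g=e$ the automorphism $Ad(g)$ is trivial and the conclusion of regularity is vacuous. The hypothesis that the pair \emph{itself} be regular is genuinely used, but at orbits $HgH$ whose symmetrization $x=s(g)$ lies in $Z(G)$ while $Ad(g)\ne\mathrm{Id}$; these are exactly the admissible elements, and for them $G_x=G$, so no dimension drop occurs and one cannot appeal to the inner induction. You should therefore organize the induction on, say, $\dim G_x$ with the base case being $\dim G_x=\dim G$, i.e.\ the admissible $g$, rather than singling out $g=e$. Also, the clause about ``a Fourier-and-homogeneity argument \ldots to extend $Ad(g)$-invariance'' overlaps with what regularity already asserts by definition and should be dropped: in the proof of this theorem one simply \emph{invokes} regularity; the Fourier--homogeneity (or, in this paper, the co-isotropicity/$D$-module) machinery enters only in \emph{establishing} regularity for particular pairs, which is the content of Section~\ref{sec3} and of \cite{AG_HC, AG_regular, S}.
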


\begin{corollary} Theorem C implies Theorem A.
\end{corollary}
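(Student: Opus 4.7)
The plan is to apply Theorem \ref{GoodHerRegGK}, which asserts that a good symmetric pair all of whose descendants (including itself) are regular must be a GK pair. Since Theorem A$'$ is a reformulation of Theorem A, and Theorem A$'$ just says that $(GL_{2n},Sp_{2n})$ is a GK pair, it suffices to establish that (i) $(GL_{2n},Sp_{2n})$ is good and (ii) every descendant of it is regular. Theorem C itself supplies the regularity of the pair in question, so only (i) and the regularity of the \emph{proper} descendants remain.

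For (i), I would use the symmetrization map $s\colon G\to G^{\sigma}$. By Lemma \ref{symmetrization_is_open} and Proposition \ref{PropDescend}, closed $H(\rr)\times H(\rr)$-orbits in $G(\rr)$ correspond bijectively to $H(\rr)$-orbits of semisimple elements of $G^{\sigma}(\rr)$. I would then verify directly, using the explicit form of $\theta$ and a block description of semisimple $x\in G^{\sigma}$, that $\sigma(x)$ is $H(\rr)$-conjugate to $x$; equivalently, $HgH=H\sigma(g)H$ for any $g$ with $HgH$ closed. The conjugating element can be built out of $J$ together with the eigenspace decomposition of $x$ in the standard symplectic representation of $H$.

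For (ii), I would enumerate the descendants explicitly. Given $g\in G(\rr)$ with $HgH$ closed and $x=s(g)$ semisimple, the spectral decomposition of $x$ splits $\rr^{2n}$ into $x$-isotypic subspaces that are either symplectic or paired up by the symplectic form. Accordingly $(G_x,H_x,\theta|_{G_x})$ factors as a product of symmetric pairs drawn from three families: smaller symplectic pairs $(GL_{2k},Sp_{2k})$ with $k<n$, the diagonal ``group'' case $(GL_m(K)\times GL_m(K),\,GL_m(K)^{\mathrm{diag}})$ with $K\in\{\rr,\cc\}$, and Galois-type pairs such as $(GL_m(\cc),GL_m(\rr))$ coming from complex conjugate eigenvalue pairs. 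Regularity of the first family is supplied by Theorem C applied at the smaller level; the other two families are already known to be regular (\cite{AG_HC}, \cite{AG_JR}). Since a product of regular pairs is regular and the descendant operation is compatible with this factorization, every descendant is regular.

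The main obstacle is the bookkeeping in (ii): one must pin down exactly which symmetric pairs arise as centralizers $(G_x,H_x)$ inside $(GL_{2n},Sp_{2n})$. This is essentially a linear-algebra classification of semisimple $x\in G^{\sigma}$, organized according to whether the eigenvalues lie in $\rr^{\times}\setminus\{\pm 1\}$, in $\cc^{\times}\setminus\rr^{\times}$, or equal $\pm 1$, and of the compatibility of the induced $\theta$-action on each isotypic block with the symplectic form that defines $H$.
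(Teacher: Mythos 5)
Your overall strategy is the same as the paper's: invoke Theorem~\ref{GoodHerRegGK}, so that the task reduces to checking that $(GL_{2n},Sp_{2n})$ is good and that all its descendants (including itself) are regular, with Theorem~C supplying the regularity of the pair itself. For goodness you propose a hands-on verification via the symmetrization map; the paper instead cites \cite{S}, Corollary~3.1.3. That divergence is fine.

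The genuine problem is your enumeration of the descendants. For a closed $H\times H$-orbit, the semisimple element $x=s(g)\in G^\sigma(\R)$ is \emph{self-adjoint} with respect to $\omega$: a short computation from $\sigma(x)=x$ together with the explicit $\theta$ gives $\omega(xv,w)=\omega(v,xw)$. Hence distinct eigenspaces of $x$ are $\omega$-orthogonal, and each eigenspace is itself a nondegenerate symplectic subspace; the situation where two eigenspaces are $\omega$-isotropic and dually paired never arises here. Consequently $(G_x,H_x)$ is always a product of pairs of the form $(GL_{2m}(\R),Sp_{2m}(\R))$ and, for complex-conjugate eigenvalue pairs, $(GL_{2m}(\C),Sp_{2m}(\C))$ viewed as a real pair, i.e.\ $((GL_{2m})_{\C/\R},(Sp_{2m})_{\C/\R})$. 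This is the list recorded in the paper (cited from \cite{S}). The ``group'' case $(GL_m\times GL_m,\,GL_m^{\mathrm{diag}})$ and the Galois case $(GL_m(\C),GL_m(\R))$ in your list simply do not occur as descendants of $(GL_{2n},Sp_{2n})$; those belong to other symmetric pairs. Your argument does not immediately collapse only because the extraneous pairs you listed happen to be known regular pairs, but the classification as written is wrong.

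More seriously, the correct list contains the complex symplectic pair $(GL_{2m}(\C),Sp_{2m}(\C))$, which your enumeration omits entirely. Its regularity is \emph{not} covered by Theorem~C, which is a statement over $\R$ only; the paper supplies it by citing \cite{S}, Corollary~3.3.1. So as it stands your proposal both misidentifies which descendants arise and leaves the regularity of the one genuinely new descendant unaddressed. Fix the descendant computation using the self-adjointness of $x$, drop the group/Galois families, and add a reference for the regularity of $((GL_{2m})_{\C/\R},(Sp_{2m})_{\C/\R})$.
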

\begin{proof}
The pair $(GL_{2n},Sp_{2n})$ is good by Corollary 3.1.3 of
\cite{S}. In \cite{S} it is shown that all the descendance of the
pair $(GL_{2n},Sp_{2n})$ are products of pairs of the form
$(GL_{2m},Sp_{2m})$ and $((GL_{2m})_{\C/\R},(Sp_{2m})_{\C/\R})$,
here $G_{\cc/\rr}$ denotes the restriction of scalars (in
particular $G_{\C/\R}(\R)=G(\C)$). By Corollary 3.3.1. of \cite{S}
the pair $((GL_{2m})_{\C/\R},(Sp_{2m})_{\C/\R})$ is regular. Now
clearly Theorem C implies Theorem A' and hence Theorem A.
\end{proof}

We denote by $O(V)$ the space of regular functions on the
algebraic variety $V$. We will also need the following
Proposition, which must be well known.
\begin{proposition}\label{Submersive}
Let $\pi: \g^{\sigma} \to Spec(O(\g^{\sigma}))^{H}$ be the
projection. Let $x \in \mathcal{N}_{G,H}$ be a smooth point. Then
$\pi$ submersive at $x$.

\end{proposition}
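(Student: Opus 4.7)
The plan is to show that the differentials of a generating set of invariants are linearly independent at $x$. Concretely, I would choose homogeneous generators $f_1,\dots,f_r$ of the ring $O(\g^\sigma)^H$, which is a polynomial ring by the Kostant--Rallis theorem applied to the symmetric pair $(G,H,\theta)$, where $r = \dim \operatorname{Spec}(O(\g^\sigma))^H$. Under the identification $\operatorname{Spec}(O(\g^\sigma))^H \cong \mathbb{A}^r$ via the $f_i$, the map $\pi$ is simply $y \mapsto (f_1(y),\dots,f_r(y))$, and submersivity at $x$ amounts to linear independence of $df_1|_x,\dots,df_r|_x$ in $(\g^\sigma)^*$.

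Two facts then combine to yield this. First, every $f_i$ vanishes identically on $\mathcal{N}_{G,H}$, so $T_x \mathcal{N}_{G,H} \subseteq \bigcap_i \ker(df_i|_x)$. Second, Kostant--Rallis further asserts that $\mathcal{N}_{G,H}$ is a reduced complete intersection of codimension $r$ in $\g^\sigma$, cut out precisely by $f_1, \dots, f_r$; this promotes the preceding inclusion to an equality and also pins down $\dim \mathcal{N}_{G,H} = \dim \g^\sigma - r$. Smoothness of $\mathcal{N}_{G,H}$ at $x$ then yields $\dim \bigcap_i \ker(df_i|_x) = \dim T_x \mathcal{N}_{G,H} = \dim \g^\sigma - r$. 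Consequently $df_1|_x, \dots, df_r|_x$ span an $r$-dimensional subspace of $(\g^\sigma)^*$, so they are linearly independent, and $d\pi_x$ is surjective.

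The main obstacle is invoking Kostant--Rallis in exactly the form required, namely polynomial invariants together with the reduced complete-intersection property of the null cone. For the pairs relevant to this paper --- $(GL_{2n},Sp_{2n})$ over $\R$ and its descendants, which by \cite{S} are products of $(GL_{2m},Sp_{2m})$ over $\R$ or over $\C$ --- both statements are classical, and explicit generators can be written down as coefficients of characteristic-polynomial-type expressions on $\g^\sigma$. An alternative route avoiding Kostant--Rallis would be to observe that smooth points of $\mathcal{N}_{G,H}$ lie in the open $H$-orbit of regular nilpotents and to verify submersivity there directly using a normal Jacobson--Morozov triple attached to $x$.
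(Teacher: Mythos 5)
Your argument is correct and is essentially the same as the paper's: both reduce submersivity to the tangent-space identity $\ker(d_x\pi)=T_x\mathcal{N}_{G,H}$ at a smooth point and then count dimensions, and both derive that identity from the Kostant--Rallis theorem (the paper cites Theorem 14 of \cite{KR} for radicality of the ideal generated by positive-degree invariants and then invokes the Nullstellensatz; you phrase the same input as reducedness and the complete-intersection property of the null cone together with polynomiality of the invariant ring). Your write-up just makes explicit the dimension count that the paper leaves implicit in the final sentence of its proof.
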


\begin{proof}
Let $\mathcal{J}=\{f \in O(\g^{\sigma})^{H}: f(0)=0\}.$ By Theorem
14 of \cite{KR}, $\mathcal{J}$ is a radical ideal. Using the
Nullstellensatz, this implies that $Ker(d_{x}
\pi)=T_{x}(\mathcal{N}_{G,H}).$ This proves that $\pi$ is
submersive.
\end{proof}

\subsection{Singular support of distributions}
$ $\\
In this subsection we introduce an important invariant $SS(\xi)$
of a distribution $\xi$ and list some of its properties. For more
details see \cite{AG_strong}.

\begin{notation}
Let $X$ be a smooth algebraic variety. Let $\xi \in \Sc^*(X(\R))$.
Let $M_{\xi}$ be the $D_{X}$ submodule of $\Sc^*(X(\R))$ generated
by $\xi$. We denote by $SS(\xi) \subset T^*X$ the singular support
of $M_{\xi}$ (for the definition see \cite{Bor}). We will call it
the {\bf singular support} of $\xi$.
\end{notation}

\begin{remark}
$ $ \\
(i) A similar, but not equivalent notion is sometimes called in
the literature a 'wave front of $\xi$'.
\\
(ii) In some of the literature, singular support of a distribution
is a subset of $X$ not to be confused with our $SS(\xi)$ which is
a subset of $T^*X.$ We use terminology from the theory of $D$
modules where the set $SS(\xi)$ is called both the characteristic
variety and the singular support of the $D$ module $M_{\xi}$.
\end{remark}

\begin{notation}
Let $(V,B)$ be a quadratic space. Let $X$ be a smooth algebraic
variety. Consider $B$ as a map $B:V \to V^*$. Identify $T^*(X
\times V)$ with $T^*X \times V \times V^*$. We define $F_V: T^*(X
\times V) \to T^*(X  \times V)$ by $F_V(\alpha,v,\phi):= (\alpha,
-B^{-1}\phi,Bv)$.
\end{notation}

\begin{definition}
Let $M$ be a smooth algebraic variety and $\omega$ be a symplectic
form on it.
Let $Z\subset M$ be an algebraic subvariety. We call it {\bf $M$-co-isotropic} if one of the following equivalent conditions holds.\\
\begin{enumerate}
\item
The ideal sheaf of regular functions that vanish on $\overline{Z}$ is closed under Poisson bracket. \\
\item
At every smooth point $z \in Z$ we have  $T_zZ \supset
(T_zZ)^{\bot}$. Here, $(T_zZ)^{\bot}$ denotes the orthogonal
complement
to $(T_zZ)$ in $(T_zM)$ with respect to $\omega$. \\
\item
For a generic smooth point $z \in Z$ we have $T_zZ \supset
(T_zZ)^{\bot}$.
\end{enumerate}
If there is no ambiguity, we will call $Z$ a co-isotropic variety.
\end{definition}
Note that every non-empty $M$-co-isotropic variety is of dimension
at least $\frac{1}{2}dimM$.

\begin{notation}
For a smooth algebraic variety $X$ we always consider the standard
symplectic form on $T^*X$. Also, we denote by $p_X:T^*X \to X$ the
standard projection.
\end{notation}

Let $X$ be a smooth algebraic variety. Below is a list of
properties of the Singular support. Proofs can be found in
\cite{AG_strong} section 2.3 and Appendix B.

\begin{property} \label{Supp2SS}
Let $\xi \in \Sc^*(X (\R))$.  Then $\overline{\Supp(\xi)}_{Zar} =
p_X(SS(\xi))(\R)$, where $\overline{\Supp(\xi)}_{Zar} $ denotes
the Zariski  closure of
 $\Supp(\xi)$.
\end{property}
\begin{property} \label{Ginv}
\item Let an algebraic group $G$ act on $X$. Let $\g$ denote the
Lie algebra of $G$. Let $\xi \in \Sc^*(X(\R))^{G(\R)}$. Then
$$SS(\xi) \subset \{(x,\phi) \in T^*X \, | \, \forall \alpha \in
\g \, \phi(\alpha(x)) =0\}.$$
\end{property}
\begin{property} \label{Fou}
Let $(V,B)$ be a quadratic space. Let $Z \subset X \times V$ be a
closed subvariety, invariant with respect to homotheties in $V$.
Suppose that $\Supp(\xi) \subset Z(\R)$. Then $SS(\Fou_V(\xi))
\subset F_V(p_{X \times V}^{-1}(Z))$.
\end{property}
Finally, the following is a corollary of the integrability theorem
(\cite{KKS}, \cite{Mal}, \cite{Gab}):

\begin{property} \label{CorSingSup} Let $X$ be a smooth algebraic
variety. Let $\xi \in \Sc^*(X(\R))$. Then $SS(\xi)$ is
co-isotropic.
\end{property}

\section{Invariant distributions supported on non-distinguished nilpotent orbits in symmetric
pairs} \label{sec2}
For this section we fix a symmetric pair $(G,H,\theta).$
\begin{definition}
We say that a nilpotent element $x \in \g^{\sigma}$ is
distinguished if $$\g_{x} \cap Q(\g^{\sigma}) \subset
\mathcal{N}_{G,H}$$
\end{definition}

\begin{theorem}\label{maintrick}
Let $A \subset \mathcal{N}_{G,H}$
be an $H$ invariant closed subset and assume that all elements of
$A$ are non-distinguished. Let $W=\Sc^{*}_{\g^{\sigma}}(A)^{H}$.
Then $W \cap \Fou(W) =\{0\}.$
\end{theorem}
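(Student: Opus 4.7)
The strategy is to analyze the singular support $SS(\xi)$ of a hypothetical nonzero $\xi\in W\cap\Fou(W)$ and derive a contradiction by combining coisotropy of $SS(\xi)$ with the non-distinguished hypothesis. First I dispose of the case $Z:=(\g^\sigma)^G\neq 0$: decomposing $\g^\sigma=Q(\g^\sigma)\oplus Z$, any $\xi\in W$ has support in $Q(\g^\sigma)\times\{0\}$, so can be written as a finite sum $\xi=\sum_\alpha\xi'_\alpha\otimes\partial^\alpha\delta_0^Z$ with $\xi'_\alpha\in\Sc^*(Q(\g^\sigma))$; its Fourier transform is polynomial in the $Z$-coordinate with distributional coefficients $\Fou_Q(\xi'_\alpha)$, and cannot be supported in $Q(\g^\sigma)\times\{0\}$ unless all $\xi'_\alpha$ vanish. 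So we may assume $Q(\g^\sigma)=\g^\sigma$, in which case $\mathcal{N}_{G,H}$ is the nilpotent cone of $\g^\sigma$.

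Fix an $H$-invariant non-degenerate symmetric bilinear form $B$ on $\g^\sigma$ (for $G=GL_{2n}$ one takes $B(X,Y)=\operatorname{tr}(XY)$) and use it both to define $\Fou$ and to identify $T^*\g^\sigma\cong\g^\sigma\times\g^\sigma$. For $\xi\in W\cap\Fou(W)$: Property \ref{Supp2SS} together with $\Supp\xi\subset A$ gives $SS(\xi)\subset A\times\g^\sigma$; Property \ref{Fou} applied to $\Fou^{-1}\xi\in W$, whose support lies in the cone $\mathcal{N}_{G,H}$, gives $SS(\xi)\subset\g^\sigma\times\mathcal{N}_{G,H}$; and Property \ref{Ginv}, combined with cyclic invariance of $B$ and the orthogonality $\h\perp_B\g^\sigma$, identifies the zero fiber of the moment map with the commuting variety $\{(x,y)\in\g^\sigma\times\g^\sigma:[x,y]=0\}$. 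Altogether, $SS(\xi)\subset\{(x,y)\in A\times\mathcal{N}_{G,H}:y\in\g^\sigma_x\}$, where $\g^\sigma_x$ denotes the centralizer of $x$ in $\g^\sigma$.

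Now suppose $\xi\neq 0$. By Property \ref{CorSingSup}, $SS(\xi)$ is coisotropic, so it has an irreducible component $C$ with $\dim C\geq\dim\g^\sigma$. Since $H$ is connected in the applications, $C$ is $H$-invariant, so $\overline{p(C)}$ is an $H$-orbit closure $\overline{O}$ with $O\subset A$. For generic $x\in O$, the fiber $C_x\subset\mathcal{N}_{G,H}\cap\g^\sigma_x$, and fiber dimension gives $\dim C=\dim O+\dim C_x$. The Kostant--Rallis identity $\dim H\cdot x+\dim\g^\sigma_x=\dim\g^\sigma$ (equivalently $\dim G\cdot x=2\dim H\cdot x$ for $x\in\g^\sigma$, which follows from the decomposition $\g_x=\h_x\oplus\g^\sigma_x$ together with the rank equality $\dim[\h,x]=\dim[\g^\sigma,x]$ coming from $B$-adjointness of $\ad(x)\colon\h\to\g^\sigma$ and $\ad(x)\colon\g^\sigma\to\h$) then forces $\dim C_x\geq\dim\g^\sigma_x$. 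Since $\g^\sigma_x$ is irreducible and $C_x\subset\g^\sigma_x$ is closed of full dimension, $C_x=\g^\sigma_x$, whence $\g^\sigma_x\subset\mathcal{N}_{G,H}$. But this says $x$ is distinguished, contradicting $O\subset A$. Hence $\xi=0$.

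The main obstacle is the precise identification of the moment-map zero fiber with the commuting variety and the use of the Kostant--Rallis dimension identity: together they convert the abstract coisotropy bound into the concrete statement $\g^\sigma_x\subset\mathcal{N}_{G,H}$, which directly conflicts with the non-distinguished hypothesis.
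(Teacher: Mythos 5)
Your proof is correct and follows the same overall strategy as the paper: use Properties \ref{Supp2SS}, \ref{Ginv}, \ref{Fou} to bound $SS(\xi)$ inside the commuting variety sitting over $A$, then invoke Property \ref{CorSingSup} to force $SS(\xi)=\emptyset$. The difference is in how the final dimension count is packaged. The paper's Proposition \ref{mainprop} stratifies $A$ by $H$-orbits $\mathcal{O}$ and shows each stratum $p^{-1}(\mathcal{O})$ of the commuting variety has dimension strictly less than $\dim Q(\g^\sigma)$ (because it is a proper closed subvariety of the vector bundle $C_{\mathcal{O}}$, which is equidimensional of dimension $\dim Q(\g^\sigma)$); hence no stratum, and therefore no subvariety of $B$, can be coisotropic. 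You instead pick one irreducible component $C$ of $SS(\xi)$, use coisotropy to get $\dim C\geq\dim\g^\sigma$, fiber $C$ over the orbit dense in $\overline{p(C)}$, and combine the Kostant--Rallis identity $\dim H\cdot x+\dim\g^\sigma_x=\dim\g^\sigma$ with irreducibility of $\g^\sigma_x$ to force $C_x=\g^\sigma_x\subset\mathcal{N}_{G,H}$, contradicting non-distinguishedness. These are two presentations of the same dimension inequality (Kostant--Rallis is equally what makes $\dim C_{\mathcal{O}}=\dim Q(\g^\sigma)$ in the paper); yours is more explicit about where the contradiction arises. Two small remarks. First, you do not need $H$ connected: $\overline{p(C)}$ is irreducible, hence meets a unique orbit in a dense open subset, and the generic-fiber dimension argument over that orbit works without any invariance of $C$. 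Second, your explicit treatment of $Z:=(\g^\sigma)^G\neq 0$ (which does occur here, as $\operatorname{Id}\in\g^\sigma$) is a genuine point that the paper's proof passes over by conflating $T^*\g^\sigma$ with $T^*Q(\g^\sigma)$; your reduction via the finite-order structure theorem for tempered distributions supported in a subspace is valid, though a shorter route is available: the support conditions already give $SS(\xi)\subset Q(\g^\sigma)\times Q(\g^\sigma)$, and when $Z\neq 0$ no non-empty subvariety of $Q(\g^\sigma)\times Q(\g^\sigma)$ can be $T^*\g^\sigma$-coisotropic, since its tangent spaces lie in $Q\times Q$ while their symplectic perpendiculars contain $Z\times Z$.
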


\begin{remark}
We believe that the methods of \cite{SZ} allow to show the same
result without the assumption of $H$-invariance.
\end{remark}
The proof is based on the following proposition:

\begin{proposition}\label{mainprop}
Let $A \subset \mathcal{N}_{G,H}$
be an $H$ invariant closed subset and assume that all elements of
$A$ are non-distinguished. Denote by $B=\{(\alpha,\beta) \in A
\times A: [\alpha,\beta]=0\} \subset Q(\g^{\sigma}) \times
Q(\g^{\sigma})$. Identify $T^{*}(Q(\g^{\sigma}))$ with
$Q(\g^{\sigma}) \times Q(\g^{\sigma}).$
Then there is no non-empty
$T^{*}(Q(\g^{\sigma}))$-co-isotropic subvariety of $B$.
\end{proposition}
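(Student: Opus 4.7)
My strategy is a pure dimension count: since any non-empty co-isotropic subvariety of the $2d$-dimensional symplectic space $T^*(Q(\g^\sigma))$, where $d := \dim Q(\g^\sigma)$, has dimension at least $d$, it suffices to show that every irreducible component $C$ of $B$ satisfies $\dim C \leq d-1$. By the Kostant--Rallis theorem the nilpotent cone $\mathcal{N}_{G,H}$ is a finite union of $H$-orbits, so the $H$-invariant closed set $A$ is a finite union of orbit closures. Let $\pi_1\colon B \to A$ be the projection to the first factor; then $\pi_1(C)$ is irreducible and there is a unique $H$-orbit $O$ with $\pi_1(C)\cap O$ dense in $\pi_1(C)$, so $\dim\pi_1(C) \leq \dim O$.

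Fix a generic $\alpha \in \pi_1(C)\cap O$, so that $(\pi_1|_C)^{-1}(\alpha) \subset (\g_\alpha \cap Q(\g^\sigma))\cap \mathcal{N}_{G,H}$. The proof then reduces to two dimension facts: a duality (a) $\dim O + \dim(\g_\alpha \cap Q(\g^\sigma)) = d$ and a strict inequality (b) $\dim((\g_\alpha\cap Q(\g^\sigma))\cap \mathcal{N}_{G,H}) \leq \dim(\g_\alpha\cap Q(\g^\sigma)) - 1$. Granting (a) and (b),
\[
  \dim C \leq \dim\pi_1(C) + \dim(\pi_1|_C)^{-1}(\alpha) \leq \dim O + \bigl(\dim(\g_\alpha\cap Q(\g^\sigma)) - 1\bigr) = d-1,
\]
which finishes the argument.

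For (b), the non-distinguished hypothesis is exactly $\g_\alpha \cap Q(\g^\sigma) \not\subset \mathcal{N}_{G,H}$; since $\g_\alpha\cap Q(\g^\sigma)$ is a linear subspace (hence irreducible) and $\mathcal{N}_{G,H}$ is Zariski-closed (cut out by the non-leading coefficients of the characteristic polynomial), the intersection is a proper closed subvariety of $\g_\alpha\cap Q(\g^\sigma)$ and so has strictly smaller dimension. For (a), choose an $\operatorname{Ad}(G)$- and $\theta$-invariant non-degenerate symmetric bilinear form on $\g$ (the Killing form in the semisimple case, the trace form for $\g = \mathfrak{gl}_{2n}$); by $\theta$-invariance it restricts to non-degenerate forms on $\h$ and on $\g^\sigma$, and by $H$-invariance the splitting $\g^\sigma = (\g^\sigma)^H \oplus Q(\g^\sigma)$ is orthogonal, so the form is non-degenerate on $Q(\g^\sigma)$. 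The identity $\langle[X,\alpha], Y\rangle = \langle X,[\alpha,Y]\rangle$ combined with $[\alpha,Y]\in\h$ and non-degeneracy on $\h$ then identifies $[\h,\alpha]$ and $\g_\alpha \cap Q(\g^\sigma)$ as mutual orthogonal complements inside $Q(\g^\sigma)$, yielding (a).

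The main technical point is establishing the duality (a) on the quotient $Q(\g^\sigma)$ rather than on $\g^\sigma$: one must confirm both that the chosen invariant form restricts non-degenerately to $Q(\g^\sigma)$ and that $[\h,\alpha]$ genuinely lies in $Q(\g^\sigma)$. Both follow from $H$-reductivity and from the $H$-invariance of the canonical splitting $\g^\sigma = (\g^\sigma)^H \oplus Q(\g^\sigma)$. Once (a) and (b) are in place, the dimension count collapsing $\dim C$ strictly below $d$ is elementary.
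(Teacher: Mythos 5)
Your proof is correct and follows essentially the same approach as the paper: both reduce the statement to a dimension count below $\dim Q(\g^\sigma)$ by fibering the commuting variety over an $H$-orbit $\mathcal{O}$ in $A$, using the duality $\dim\mathcal{O}+\dim(\g_\alpha\cap Q(\g^\sigma))=\dim Q(\g^\sigma)$ to show the full orbit-restricted commuting variety has dimension exactly $\dim Q(\g^\sigma)$, and using the non-distinguished hypothesis to cut a proper closed subvariety out of each fiber. The only difference is presentational: the paper stratifies $B$ by the projections $p^{-1}(\mathcal{O})$ and asserts the duality as $\dim C_{\mathcal{O}}=\dim Q(\g^\sigma)$ without spelling it out, whereas you work with irreducible components of $B$ and make the bilinear-form argument behind the duality explicit.
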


\begin{proof}
Stratify $A$ by finite many orbits
$\mathcal{O}_{1},...,\mathcal{O}_{r}$.
Let $p: A \times A \to A$ be the projection onto the first factor.
By inductive argument it is enough to show that, for any orbit
$\mathcal{O}$, $p^{-1}(\mathcal{O})$ does not include a non empty
co-isotropic subvariety. Consider the set
$$C_{\mathcal{O}}=\{(a,b): a \in \mathcal{O}, b \in Q(\g^{\sigma}),
[a,b]=0\}.$$ Then $dim(C_{\mathcal{O}})=dim(Q(\g^{\sigma})).$
Since $\mathcal{O}$ is not distinguished, $p^{-1}(\mathcal{O})$
 is a closed subvariety of $C_{\mathcal{O}}$ which does not
include any of the irreducible components of $C_{\mathcal{O}}.$
This finishes the proof.
\end{proof}

\begin{proof}[Proof of Theorem \ref{maintrick}]
Let $\xi \in W \cap \Fou(W)$ and let $B$ be as in proposition
\ref{mainprop}. By properties \ref{Supp2SS}, \ref{Ginv}, \ref{Fou}
we conclude that $SS(\xi) \subset B$. But by Property
\ref{CorSingSup} it is co-isotropic and hence by Proposition
\ref{mainprop} it is empty. Thus $\xi=0$.
\end{proof}

\section{Regularity}\label{sec3}

In this section we prove the main result of the paper:
\begin{theoremmm*}\label{main} The pair $(GL_{2n},Sp_{2n})$
defined over $\rr$ is regular.
\end{theoremmm*}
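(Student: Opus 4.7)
The plan is to combine the Fourier-theoretic vanishing of Theorem \ref{maintrick} with a direct analysis of the unique open distinguished orbit in the nilpotent cone.

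\emph{Setup and Fourier reduction.} Let $g \in G(\rr)$ be admissible and assume every $H(\rr)$-invariant distribution on $R_{G,H}$ is $Ad(g)$-invariant. For any $H$-invariant distribution $\xi$ on $Q(\gd)$, set $\eta := \xi - (Ad(g))^{*}\xi$. Then $\eta$ is $H$-invariant and, since $\xi|_{R_{G,H}}$ is $Ad(g)$-invariant by hypothesis, $\Supp(\eta) \subset \mathcal{N}_{G,H}$. I take $B$ to be the $G$-invariant trace form on $\gd$, which is automatically $H$-invariant and $Ad(g)$-invariant (the latter because $g \in G(\rr)$ and admissibility implies $Ad(g)$ preserves $\gd$). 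Then $\Fou_B$ commutes with $(Ad(g))^{*}$ and preserves $H$-invariance. Applying the hypothesis to the $H$-invariant distribution $\Fou_B(\xi)$ gives $\Supp(\Fou_B(\eta)) \subset \mathcal{N}_{G,H}$. The theorem is thus reduced to: any $H$-invariant $\eta$ on $Q(\gd)$ with both $\Supp(\eta)$ and $\Supp(\Fou_B(\eta))$ in $\mathcal{N}_{G,H}$ vanishes.

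\emph{Applying Theorem \ref{maintrick}.} By the introduction, $\mathcal{N}_{G,H}$ contains a unique distinguished orbit $\mathcal{O}$, and $\mathcal{O}$ is open in $\mathcal{N}_{G,H}$. Set $A := \mathcal{N}_{G,H} \setminus \mathcal{O}$; it is closed, $H$-invariant, and consists entirely of non-distinguished orbits. By Theorem \ref{maintrick} applied to $A$, any $H$-invariant distribution with support and Fourier support inside $A$ vanishes. Consequently, to conclude $\eta = 0$ it suffices to show $\Supp(\eta) \subset A$, equivalently that the restriction $\eta|_U$ to the open set $U := \gd \setminus A$ is zero. On $U$, the only remaining piece of $\mathcal{N}_{G,H}$ is $\mathcal{O}$ itself, so $\eta|_U$ is an $H$-invariant distribution on $U$ supported on the closed $H$-submanifold $\mathcal{O} \subset U$.

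\emph{Handling the open orbit.} By Proposition \ref{Strat} applied to the one-stratum stratification $\{\mathcal{O}\}$ inside $U$, vanishing of $\eta|_U$ follows from $\Sc^{*}(\mathcal{O}, Sym^k(CN_{\mathcal{O}}^{U}))^{H,\chi} = 0$ for every $k \geq 0$, with $\chi$ the appropriate modular character. Via Frobenius reciprocity (Theorem \ref{Frob}) at a base point $x \in \mathcal{O}$, this reduces to the vanishing of certain $H_x$-equivariant functionals on the fibre $Sym^k(CN_{\mathcal{O},x}^{U})$. Proposition \ref{Submersive} is crucial here: since points of $\mathcal{O}$ are smooth in $\mathcal{N}_{G,H}$, the categorical quotient map $\pi$ is submersive at $x$, which identifies $CN_{\mathcal{O},x}^{U}$ with the cotangent fibre of $\gd /\!/ H$ at $0$ and lets the Fourier-support condition on $\Fou_B(\eta)$ translate into a concrete vanishing constraint on the associated symbols. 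Combining this with an explicit description of $H_x$ (obtained from the \cite{GG} identification of nilpotent $Sp_{2n}$-orbits on $\gd$ with adjoint $GL_n$-orbits on its nilpotent cone, so that $x$ corresponds to a regular nilpotent whose centralizer is well understood) then forces the required vanishing.

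\emph{Main obstacle.} The genuine difficulty is precisely the last step: for distinguished $x$ the candidate $\{(x,\phi) : x \in \mathcal{O},\, \phi \in \g_x \cap \gd\}$ has the maximal dimension $\dim Q(\gd)$, hence is an honest co-isotropic, so the bare $D$-module argument underlying Proposition \ref{mainprop} and Theorem \ref{maintrick} does not eliminate it. The key input that must do the extra work is the submersivity of $\pi$ at $x$ together with the very concrete structure of the centralizer of a regular nilpotent in the $GL_n$-picture; getting these two to combine cleanly is where the real content of the proof lies.
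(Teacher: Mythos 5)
Your overall architecture matches the paper's: the Fourier reduction to a distribution $\eta$ with $\Supp(\eta)$ and $\Supp(\Fou_B(\eta))$ in $\mathcal{N}_{G,H}$, the application of Theorem~\ref{maintrick} to the closed non-distinguished set $A$, and the reduction to ruling out invariant distributions supported on the open distinguished orbit $\mathcal{O}=O_0$ using Proposition~\ref{Strat} and Proposition~\ref{Submersive}. You also correctly diagnose why Theorem~\ref{maintrick} alone cannot kill $O_0$: the candidate variety over the distinguished orbit has dimension $\dim Q(\gd)$ and is genuinely co-isotropic.

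However, your proof does not actually close the gap at $O_0$, and you say as much. The paper's final step is cleaner than what you propose and does not require either the Fourier-support condition at this stage or an explicit description of the centralizer of a regular nilpotent. The missing observations are: (1) the correct equivariance bookkeeping is not $(H,\chi)$ with $\chi$ a modular character, but $(\widetilde{H},\chi)$ where $\widetilde{H}=\langle Ad(H),Ad(g)\rangle$ and $\chi$ is the sign character equal to $-1$ on $\widetilde{H}\setminus Ad(H)$; anti-$Ad(g)$-invariance is encoded by this pair, and using only $H$-invariance would be hopeless since $\Sc^*(O_0)^{H}\neq 0$. (2) Since $\widetilde{H}$ acts trivially on $\operatorname{Spec}(O(\gd))^{H}$ and Proposition~\ref{Submersive} exhibits $N_{O_0}^{Q(\gd)}$ as the pullback of the (co)tangent space at $0$ of the categorical quotient, the normal bundle is trivial as a $\widetilde{H}$-equivariant bundle, so $Sym^k(CN_{O_0})$ carries the trivial $\widetilde{H}$-action and the question collapses to $\Sc^*(O_0)^{\widetilde{H},\chi}=0$ for all $k$ at once. (3) That last vanishing then follows because $O_0$ is a single $H$-orbit that is $\widetilde{H}$-stable (Proposition~\ref{Hdistinguished}); hence the $\widetilde{H}$-stabilizer $\widetilde{H}_x$ of $x\in O_0$ strictly contains $H_x$ and meets $\widetilde{H}\setminus H$, so $\chi$ is nontrivial on $\widetilde{H}_x$, while the modular twist in Frobenius reciprocity is valued in $\R_{>0}$, forcing the Frobenius character to be nontrivial and the space to vanish. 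Your plan to instead feed in the Fourier constraint on $\Fou_B(\eta)$ and the explicit regular-nilpotent centralizer is both unnecessary and unresolved; the argument you would need (the interaction you flag as the ``real content'') is exactly what the pullback-from-the-quotient triviality replaces.
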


For the rest of this section we let $(G,H)$ to be the symmetric
pair $(GL_{2n}(\rr),Sp_{2n}(\rr)).$

\subsection{$H$ orbits on $\g^{\sigma}$}
\begin{proposition}\label{Hdistinguished}
There exists a unique distinguished $H$-orbit in
$\mathcal{N}_{G,H}(\rr)$. This orbit is open in
$\mathcal{N}_{G,H}(\rr)$ and invariant with respect to any
admissible $g \in G.$
\end{proposition}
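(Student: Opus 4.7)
The plan is to classify the nilpotent $H$-orbits on $\gd = \mathfrak{p}$ explicitly and verify the distinguished condition orbit by orbit. By the method of \cite{GG}, the nilpotent $Sp_{2n}$-orbits on $\mathfrak{p}$ are in bijection with the nilpotent $GL_n$-orbits on $\mathfrak{gl}_n$, hence are parametrized by partitions $\lambda$ of $n$. I would use the explicit description
$$\mathfrak{p} = \left\{\begin{pmatrix} P & Q \\ R & P^t \end{pmatrix} : Q, R \text{ skew-symmetric}\right\}$$
to pin down a representative $X_\lambda = \diag(N_\lambda, N_\lambda^t)$ for each orbit $\mathcal{O}_\lambda$, where $N_\lambda \in \mathfrak{gl}_n$ has Jordan type $\lambda$. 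The goal is to show that $\mathcal{O}_\lambda$ is distinguished precisely when $\lambda = (n)$.

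For the principal case, take $X = \diag(N, N^t)$ with $N$ a single Jordan block. A direct computation of $\g_X$ shows that the diagonal blocks of any $Y \in \g_X$ lie in $\rr[N]$ and $\rr[N^t]$, while the off-diagonal blocks $B, C$ satisfy $NB = BN^t$ and $CN = N^t C$. These constraints pin $B$ and $C$ to an $n$-dimensional space of Hankel-type matrices, and the skew-symmetry required by membership in $\mathfrak{p}$ then forces $B = C = 0$. Thus $\g_X \cap \mathfrak{p} = \Span_{\rr}\{I, X, X^2, \ldots, X^{n-1}\}$, and passing to $Q(\gd)$ by removing $\rr I$ leaves only nilpotent elements, so $\mathcal{O}_{(n)}$ is distinguished. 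For $\lambda = (\lambda_1, \ldots, \lambda_k)$ with $k \geq 2$, I would exhibit the block-diagonal element $Y = \diag(a_1 I_{\lambda_1}, \ldots, a_k I_{\lambda_k}, a_1 I_{\lambda_1}, \ldots, a_k I_{\lambda_k})$, which lies in $\g_{X_\lambda} \cap \mathfrak{p}$ for any $a_i \in \rr$; arranging the $a_i$ so that $\sum a_i \lambda_i = 0$ but the $a_i$ are not all equal produces a non-nilpotent traceless element of $\g_{X_\lambda} \cap Q(\gd)$, so $\mathcal{O}_\lambda$ is not distinguished.

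The remaining assertions follow quickly. The orbit $\mathcal{O}_{(n)}$ has the smallest centralizer dimension ($4n$) and hence the largest orbit dimension, equal to $\dim \mathcal{N}_{G,H}$, so it is open in $\mathcal{N}_{G,H}(\rr)$. For invariance under an admissible $g$, the condition $s(g) \in Z(G)$ forces $\theta(g) = \pm g$, whence $Ad(g)$ commutes with $\theta$ and preserves $\gd$, $Q(\gd)$, and centralizers in $\g$; consequently $Ad(g)$ preserves the distinguished condition, and by uniqueness it must fix $\mathcal{O}_{(n)}$.

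The principal obstacle in this plan is the centralizer computation for the regular orbit, specifically ruling out nonzero skew-symmetric solutions of the equation $NB = BN^t$ via the Hankel parametrization. A secondary concern is verifying that $\mathcal{O}_{(n)}$ consists of a single $H(\rr)$-orbit rather than several real forms of a complex orbit, which one handles by a standard connectedness argument for the stabilizer of the regular nilpotent.
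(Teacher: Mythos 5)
Your proposal follows the same strategy as the paper: use the Goldstein--Guralnick bijection to reduce the classification of nilpotent $H$-orbits on $\gd$ to partitions of $n$ via the representatives $d(N_\lambda)=\diag(N_\lambda,N_\lambda^t)$, and then identify the regular partition $(n)$ as the unique distinguished one. Where you diverge usefully is in \emph{content}: the paper dismisses both distinguishedness verifications with ``easy to see'' and ``a simple verification,'' whereas you actually carry them out. Your computation for the regular case (the off-diagonal blocks of the centralizer satisfy a Hankel-type identity and are therefore symmetric, so skew-symmetry kills them, leaving $\g_X\cap\gd=\rr[J_n]$ embedded diagonally) is correct, and your witness $Y=\diag(a_1I_{\lambda_1},\dots,a_kI_{\lambda_k},a_1I_{\lambda_1},\dots,a_kI_{\lambda_k})$ with $\sum a_i\lambda_i=0$ for the non-regular partitions is exactly the kind of explicit non-nilpotent element in $\g_{X_\lambda}\cap Q(\gd)$ one needs. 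You also supply the argument for invariance under admissible $g$ (via $s(g)\in Z(G)\Rightarrow\theta(g)=\pm g$, so $Ad(g)$ normalizes $H$ and preserves the distinguished condition), which the paper's own proof silently omits.

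The one place where your route is genuinely different from the paper's, and also a bit soft, is the openness of $\mathcal{O}_{(n)}$. You assert that the orbit dimension ``equals $\dim\mathcal{N}_{G,H}$'' without saying what $\dim\mathcal{N}_{G,H}$ is or why; the figure $4n$ you quote is the $\g$-centralizer dimension, from which one still has to extract $\dim\h_{d(J_n)}=3n$ (using the $\theta$-decomposition $\g_x=\h_x\oplus\gd_x$ and your own computation $\dim\gd_{d(J_n)}=n$) and then compare $\dim H-3n=2n^2-2n$ against $\dim\mathcal{N}_{G,H}$, which itself needs Kostant--Rallis ($\dim\gd-\mathrm{rank}$) or some other input. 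The paper sidesteps all of this with a density argument: $d(\mathcal{O}_{GL_n}^{reg})\subset C:=Ad(H)d(J_n)$, so $\overline{C}\supset d(\overline{Ad(GL_n)J_n})=d(\mathcal{N})$, and since $\overline{C}$ is $Ad(H)$-invariant and every $H$-orbit meets $d(\mathcal{N})$ by the bijection, $\overline{C}=\mathcal{N}_{G,H}$; an orbit dense in $\mathcal{N}_{G,H}$ is open in it. You should either supply the missing dimension identity or simply adopt this density argument; as written the openness claim is the only real gap in an otherwise complete and more explicit version of the paper's proof.
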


For the proof we will use the following Proposition (this is
Proposition 2.1 of \cite{GG}):

\begin{proposition} \label{for good}
Let $F$ be an arbitrary field. For  $x \in GL_{n}(F)$ define
$$\gamma(x)=\begin{pmatrix}
  x & 0 \\
  0 & I_{n}
\end{pmatrix}
$$

Then $\gamma$ induces a bijection between the set of conjugacy
classes in $GL_{n}(F)$ and the set of orbits of $Sp_{2n}(F) \times
Sp_{2n}(F)$ in $GL_{2n}(F)$.
\end{proposition}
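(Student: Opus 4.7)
The plan is to prove the bijection in three steps: well-definedness, injectivity, and surjectivity. I will work with the convention $Sp_{2n}=\{h:hJh^{t}=J\}$.

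\emph{Well-definedness.} If $y=pxp^{-1}$ in $GL_{n}(F)$, set $h_{1}:=\diag(p,p^{-t})$ and $h_{2}:=\diag(p^{-1},p^{t})$; a two-line check shows $h_{1},h_{2}\in Sp_{2n}(F)$, and $h_{1}\gamma(x)h_{2}=\gamma(y)$. So $\gamma$ descends to a well-defined map from conjugacy classes to $(Sp_{2n}\times Sp_{2n})$-orbits.

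\emph{Injectivity.} I plan to use the symmetrization map $s(g):=g\sigma(g)$. Since $s$ is invariant under right multiplication by $Sp_{2n}$ and covariant under left conjugation, if $\gamma(x)$ and $\gamma(y)$ lie in the same $(Sp_{2n}\times Sp_{2n})$-orbit then $s(\gamma(x))$ and $s(\gamma(y))$ are $GL_{2n}$-conjugate. A direct computation gives $s(\gamma(x))=\diag(x,x^{\pm t})$ (sign depending on conventions), and the Jordan type of this element at every generalized eigenvalue is exactly twice that of $x$; hence $x\sim_{GL_{n}}y$ follows.

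\emph{Surjectivity (the main step).} I plan to translate the orbit problem into a classification of $J$-symmetric invertible operators. Under $g\mapsto g^{t}Jg$ the quotient $Sp_{2n}\backslash GL_{2n}$ is identified with the space of non-degenerate alternating matrices, on which the residual left $Sp_{2n}$-action is $M\mapsto hMh^{t}$. Setting $T:=J^{-1}M$ converts this to the problem of classifying invertible $T$ with $JT=T^{t}J$ up to $Sp_{2n}$-conjugation, with $\gamma(x)$ corresponding to $T_{x}=\diag(x^{t},x)$. Given such a $T$, decompose $V=F^{2n}=\bigoplus_{p}V_{p}$ into primary $F[T]$-components. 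The identity $J(Tu,v)=J(u,Tv)$ together with the invertibility of $p(T)$ on $V_{q}$ for $p\neq q$ irreducible forces the $V_{p}$ to be pairwise $J$-orthogonal, so each $V_{p}$ inherits a non-degenerate symplectic form and is even-dimensional. On each $V_{p}$ the classical structure theorem for primary $J$-symmetric operators asserts that the elementary divisors of $T|_{V_{p}}$ occur with even multiplicity and that $(T|_{V_{p}},J|_{V_{p}})$ is determined, up to $Sp(V_{p},J|_{V_{p}})$-conjugation, by that data; equivalently $V_{p}$ admits a $T$-stable Lagrangian $U_{p}$, and choosing a complementary Lagrangian realizes $T|_{V_{p}}$ as $\diag(x_{p}^{t},x_{p})$ on $U_{p}\oplus U_{p}^{*}$. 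Assembling across $p$ gives $T\sim_{Sp}T_{x}$ for $x=\bigoplus_{p}x_{p}\in GL_{n}(F)$, so $g$ lies in the orbit of $\gamma(x)$.

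The principal difficulty is the classification step on each primary block $V_{p}$, specifically the existence of a $T$-stable Lagrangian. Over an algebraically closed field this follows quickly from a Jacobson--Morozov argument applied to the nilpotent $T|_{V_{p}}-\lambda I$ and the standard representation theory of $\mathfrak{sl}_{2}$; over a general $F$ one must argue intrinsically using the structure of finitely generated torsion modules over the PID $F[T]$ equipped with a compatible alternating form, to ensure the doubling structure splits over $F$ itself rather than merely over an extension.
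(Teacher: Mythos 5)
The paper does not prove this statement: it is quoted verbatim as Proposition~2.1 of Goldstein--Guralnick \cite{GG} (``Alternating forms and self-adjoint operators''), so there is no in-paper proof to compare against. Judged on its own terms, your outline is on the right track but has a real gap exactly where the work is.

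Your well-definedness step is correct, and your injectivity step is essentially right: $s(\gamma(x))=\mathrm{diag}(x,x^{t})$ (with $\sigma(g)=Jg^{t}J^{-1}$), and since $x^{t}\sim x$ always, a $GL_{2n}$-conjugacy $\mathrm{diag}(x,x)\sim\mathrm{diag}(y,y)$ forces the elementary divisors of $x$ and $y$ to agree (each elementary divisor of $\mathrm{diag}(x,x)$ occurs with exactly twice the multiplicity it has for $x$). Over a general field one should phrase this in terms of elementary divisors rather than ``Jordan type at a generalized eigenvalue,'' but the argument is sound. Your reduction for surjectivity---passing to nondegenerate alternating matrices via $g\mapsto g^{t}Jg$, setting $T=J^{-1}M$ to get a $J$-self-adjoint invertible operator, and decomposing into pairwise $J$-orthogonal primary blocks---is also correct and is in fact the framework of \cite{GG}.

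The gap is the step you yourself flag: that on each primary block $V_{p}$ the pair $(T|_{V_{p}},\,J|_{V_{p}})$ admits a $T$-stable Lagrangian, i.e.\ that the $F[T]$-module with compatible nondegenerate alternating form splits as $U\oplus U^{*}$ with $T$ acting dually, \emph{over the ground field $F$}. You gesture at Jacobson--Morozov over $\overline{F}$ and at ``the structure of torsion $F[T]$-modules with a compatible alternating form'' over general $F$, but neither is carried out, and the second is genuinely nontrivial: one must show that the sesquilinear pairing induced by $J$ on the $F[T]/(p^{k})$-isotypic pieces (which takes values in $F[T]/(p^{k})$ twisted by the adjoint involution) is necessarily hyperbolic, which forces the even multiplicity of elementary divisors and the existence of the Lagrangian. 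This is precisely the content of \cite{GG}, Proposition~2.1 and the surrounding classification of ``alternating forms with a self-adjoint operator,'' and it is the whole point of the result; without it your proof of surjectivity is an outline, not a proof. If you want a self-contained argument you should either reproduce the module-theoretic splitting from \cite{GG} or run a Witt-type cancellation argument directly on the primary blocks to produce the $T$-invariant Lagrangian; simply invoking Jacobson--Morozov does not work for fields of small or positive characteristic, and over $\rr$ (the case actually used in this paper) it would work only for the single real eigenvalue $0$, not for the quadratic primary factors.
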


\begin{corollary}
Let $d: \mathrm{gl}_{n} \to \g^{\sigma}$ be defined by
$$d(X)=\begin{pmatrix}
  X & 0 \\
  0 & X^{t}
\end{pmatrix}
.$$ Then $d$ induces a bijection between nilpotent conjugacy
classes in $\mathrm{gl}_{n}$ and $H$ orbits in
$\mathcal{N}_{G,H}$.
\end{corollary}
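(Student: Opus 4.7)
The strategy is to deduce the Corollary from Proposition \ref{for good} via the symmetrization map for $(GL_{2n}, Sp_{2n})$, and then to transfer from the group to the Lie algebra via the exponential map. I first verify by direct computation that $d(X) \in \g^\sigma$ and that $d$ is equivariant for the embedding $\iota: GL_n \hookrightarrow H$, $a \mapsto \mathrm{diag}(a, a^{-t})$: namely, conjugation by $\iota(a)$ sends $d(X)$ to $d(aXa^{-1})$. Hence $d$ descends to a map on orbit sets, sending nilpotents to nilpotents. The key group-level identity is $s(\gamma(x)) = \mathrm{diag}(x, x^t) =: d_G(x) \in G^\sigma$, where $s(g) = g\sigma(g)$ is the symmetrization map; this is a short matrix computation.

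Next, I invoke the standard fact that $s$ induces a bijection between $(H \times H)$-orbits in $G$ and $H$-orbits in $s(G) \subseteq G^\sigma$ (using $\sigma|_H =$ inversion, which gives $s(h_1 g h_2) = h_1 s(g) h_1^{-1}$, and the observation that $s(g_1) = s(g_2)$ forces $g_2^{-1} g_1 \in H$). Combined with Proposition \ref{for good}, $d_G = s \circ \gamma$ induces a bijection between $GL_n$-conjugacy classes in $GL_n$ and $H$-orbits in $s(G)$. Restricting to unipotent loci on both sides and passing to the Lie algebra via the exponential (which is an $H$-equivariant bijection between nilpotents and unipotents, satisfying $d_G \circ \exp = \exp \circ d$), we obtain the desired bijection between nilpotent $GL_n$-conjugacy classes in $gl_n$ and nilpotent $H$-orbits in $\g^\sigma$, realized by $d$.

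The main obstacle is to verify that every nilpotent $N \in \g^\sigma$ satisfies $\exp(N) \in s(G)$. I would handle this via Kostant-Rallis: there exists a normal $sl_2$-triple $(H_0, N, N^-)$ with $H_0 \in \h$ and $[H_0, N] = 2N$. Conjugation by $\exp(t H_0) \in H$ then scales $\exp(N)$ to $\exp(e^{2t} N)$, which tends to $e$ as $t \to -\infty$. Since $s(G)$ is open and contains $e$ by Lemma \ref{symmetrization_is_open}, and is $H$-invariant (because $h \cdot g\sigma(g) \cdot h^{-1} = (hg)\sigma(hg) \in s(G)$, using $\sigma(h) = h^{-1}$), we conclude $\exp(N) \in s(G)$, completing the argument.
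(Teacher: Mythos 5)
Your proposal is correct and follows essentially the same route as the paper: factor $d$ on the nilpotent cone through $s\circ\gamma$ (with Proposition~\ref{for good} supplying the bijectivity at the level of orbits), translate between nilpotents and unipotents, and use openness of $s(G)$ to see that every unipotent lands in the image. The paper uses the affine shift $X\mapsto 1+X$ where you use $\exp$, but these agree in spirit and both intertwine with $d$; and where the paper compresses the final step into ``$\ell(W)$ is open and hence contains all nilpotent elements,'' you make explicit the two facts that step really uses — that $s(G)$ is $Ad(H)$-invariant, and that the $H$-orbit of any (uni)nilpotent accumulates at the identity (via a Kostant–Rallis normal $\mathfrak{sl}_2$-triple with semisimple part in $\mathfrak h$). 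That added detail is a genuine improvement in rigor, since openness alone would not suffice without $H$-invariance and the degeneration to the origin.
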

\begin{proof}

Let  $s:GL_{2n} \to GL_{2n}^{\sigma}$ be given by
$s(g)=g\sigma(g).$ Let $W=s(GL_{2n}(\rr)).$ By Proposition
\ref{for good}, the map $s \circ \gamma $ induces a bijection
between conjugacy classes in $GL_{n}(\rr)$ and $H$ orbits on $W.$

Let $e:\mathcal{N} \to GL_{n}$ be given by $e(X)=1+X$ where
$\mathcal{N}$ is the cone of nilpotent elements in
$\mathrm{gl}_{n}.$ Let
 $\ell: W \to \g^{\sigma}$ given by $\ell(w)=w-1$.

Then, it is easy to see that the map $d|_{\mathcal{N}}:\mathcal{N}
\to \mathcal{N}_{G,H}$ coincides with the composition $\ell \circ
s \circ \gamma \circ e.$

To finish the proof of the Proposition it is enough to show that
$\ell(W)$ contains all nilpotent elements. Indeed, by lemma
\ref{symmetrization_is_open} the set $W=s(GL_{2n}(\rr))$ is open
and thus $\ell(W)$ is open and hence contains all nilpotent
elements.
\end{proof}

We are now ready to prove the proposition.
\begin{proof}[Proof of Proposition \ref{Hdistinguished}]
It is easy to see that if $X$ is non regular nilpotent then $d(X)$
is not distinguished. Also, a simple verification shows that if
$X=J_{n}$ is a standard Jordan block then $d(J_{n})$ is
distinguished. Thus we only need to show that $C=Ad(H) d(J_{n})$
is open in $\mathcal{N}_{G,H}$. For this we will show that $C$ is
dense in $\mathcal{N}_{G,H}$. Indeed, $\bar{C} \supset d(\overline
{Ad(GL_{n})J_{n}})=d(\mathcal{N})$, where $\mathcal{N}$ is the set
of nilpotent elements in $gl_{n}.$ But $C$ is $Ad(H)$-invariant
and this implies that $\bar{C}=\mathcal{N}_{G,H}$
\end{proof}

\subsection{Proof of Theorem C}
$ $\\
The theorem follows from Theorem \ref{maintrick} and the next
 Proposition:

\begin{proposition}
Let $g \in G$ be an admissible element.  Let $A$ be the union of
all non-distinguished elements. Note that $A$ is closed. Let $\xi$
be any $H$ invariant distribution on $\g^{\sigma}$ which is
anti-invariant with respect to $Ad(g)$. Then $Supp(\xi) \subset
A$.
\end{proposition}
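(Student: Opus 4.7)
The plan is to show that $\xi$ vanishes on the open complement $\g^{\sigma}\setminus A$. Using Proposition \ref{Hdistinguished} (the distinguished orbit $\mathcal{O}$ is unique and open in $\mathcal{N}_{G,H}$), this complement decomposes as $R_{G,H}$ together with an open saturated neighborhood $V$ of $\mathcal{O}$ in $\g^{\sigma}$ chosen so that $V\cap \mathcal{N}_{G,H}\subset \mathcal{O}$. I would treat these two pieces separately, and assemble both via Proposition \ref{Strat}.

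For the vanishing on $R_{G,H}$, I would stratify by $H$-orbits and descend. Every closed $H$-orbit $Hx\subset R_{G,H}$ has a semisimple representative $x$ (analogue of Proposition \ref{PropDescend} at the Lie-algebra level), and Frobenius reciprocity (Theorem \ref{Frob}) identifies the germ of $\xi$ near $Hx$ with an $H_x$-invariant distribution on the descendant $\g_x^{\sigma}$. By the structural computation in \cite{S}, each such descendant is a product of strictly smaller pairs of the form $(GL_{2m},Sp_{2m})$ with $m<n$ and of $((GL_{2m})_{\cc/\rr},(Sp_{2m})_{\cc/\rr})$. Induction on $n$, together with Corollary 3.3.1 of \cite{S} for the complex case, shows that every descendant is regular; since an admissible $g$ induces a compatible admissible element on each descendant, regularity forces the local $Ad(g)$-anti-invariant distribution to vanish. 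This yields $\xi|_{R_{G,H}}=0$.

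For the vanishing on $V$, fix $x\in \mathcal{O}$ and a small neighborhood $U\subset V$ of $x$. By Proposition \ref{Submersive}, the categorical quotient map $\pi:\g^{\sigma}\to \mathrm{Spec}(O(\g^{\sigma}))^{H}$ is submersive at $x$, so $\mathcal{N}_{G,H}=\pi^{-1}(0)$ is smooth at $x$ and is cut out transversally by $H$-invariants of $\g^{\sigma}$. Combined with the first step, $\xi|_{U}$ is supported on $U\cap\mathcal{O}$ and therefore admits a normal-direction Taylor expansion $\xi|_{U}=\sum_{k}\xi_{k}\otimes \delta_{0}^{(k)}$, whose coefficients $\xi_{k}$ are $H$-invariant distributions on the single orbit $U\cap\mathcal{O}$ and hence scalar multiples of the canonical $H$-invariant measure. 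Since $Ad(g)$ preserves $\mathcal{O}$ (Proposition \ref{Hdistinguished}), its action on this measure is by a sign $\eps\in\{\pm 1\}$, and the anti-invariance of $\xi$ kills every $\xi_{k}$ provided $\eps=+1$.

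The main obstacle is thus verifying $\eps=+1$. I would do this through the bijection $d:\mathfrak{gl}_{n}\to\g^{\sigma}$ introduced in the proof of Proposition \ref{Hdistinguished}: under $d$ the distinguished orbit $\mathcal{O}$ corresponds to the regular nilpotent orbit in $\mathfrak{gl}_{n}$, so the action of any admissible $g$ on $\mathcal{O}$ translates into an explicit transformation of that orbit whose effect on the invariant measure can be read off from the defining condition $s(g)\in Z(G)$. Once $\eps=+1$ is established, the proposition is proved; Theorem C then follows by feeding the support bound into Theorem \ref{maintrick}, applied to both $\xi$ and its Fourier transform $\Fou(\xi)$ (which is also $H$-invariant and $Ad(g)$-anti-invariant because the bilinear form used is $Ad(g)$-equivariant).
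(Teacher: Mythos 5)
Your proposal diverges from the paper's proof in a way that is instructive but also contains a misplaced step and a genuine gap.

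First, the treatment of $R_{G,H}$. You spend a substantial part of the argument showing that $\xi$ vanishes on $R_{G,H}$ via a descent to semisimple orbits, Frobenius reciprocity, the structural classification of descendants from \cite{S}, and induction on $n$. The paper does not do any of this, and with good reason: the proposition is invoked inside the proof of Theorem C, where one is handed an admissible $g$ with the property (built into the very definition of \emph{regular}) that every $H$-invariant distribution on $R_{G,H}$ is $Ad(g)$-invariant. For such a $g$, the restriction $\xi|_{R_{G,H}}$ is simultaneously $Ad(g)$-invariant and $Ad(g)$-anti-invariant, hence zero, and $\Supp(\xi)\subset\mathcal{N}_{G,H}$ is immediate. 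The entire point of the regularity framework (Theorem \ref{GoodHerRegGK}) is to package the semisimple descent you are sketching, precisely so that one only has to deal with the behaviour at the nilpotent cone; re-deriving it here is both unnecessary and delicate, since your outline conflates the group-level descent of Proposition \ref{PropDescend} with a Lie-algebra-level slice argument and implicitly assumes you can propagate the regularity hypothesis to each descendant, which is exactly the content of Theorem \ref{GoodHerRegGK} rather than an easy consequence of it.

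Second, the part that actually matters, namely showing $\xi$ has no mass on the open distinguished orbit $O_0$. Here your approach is close in spirit to the paper's: you invoke Proposition \ref{Submersive} to get smoothness and transversality at $O_0$, and you decompose a distribution supported on $O_0$ into transversal-derivative coefficients which are $H$-invariant distributions on $O_0$. The paper makes this precise using Proposition \ref{Strat}, reducing to the vanishing of $\Sc^*(O_0,\mathrm{Sym}^k(CN_{O_0}^{Q(\g^{\sigma})}))^{\widetilde H,\chi}$, and then observes that $\widetilde H$ acts trivially on $\mathrm{Spec}(O(\g^{\sigma}))^H$, so by Proposition \ref{Submersive} the conormal bundle is $\widetilde H$-equivariantly trivial, collapsing the problem to a single homogeneous space $O_0$ with trivial coefficients. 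Your ``Taylor expansion'' $\xi|_U=\sum_k \xi_k\otimes\delta_0^{(k)}$ is the informal version of this; it is fine heuristically but Proposition \ref{Strat} is what makes it rigorous for Schwartz distributions.

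The real gap in your proposal is that you never actually establish the sign $\varepsilon=+1$, i.e.\ that $Ad(g)$ fixes (rather than negates) the $H$-invariant measure on $O_0$. You correctly identify this as ``the main obstacle,'' but your suggested route --- translating to the regular nilpotent orbit in $\mathfrak{gl}_n$ under $d$ and ``reading off'' the effect from $s(g)\in Z(G)$ --- is a proposal, not a proof; nothing you write pins down the sign. The paper packages this issue differently: it works with the character $\chi$ of $\widetilde H=\langle Ad(H),Ad(g)\rangle$ and the $\widetilde H$-bundle structure rather than a pointwise sign on a measure. If you want your argument to go through, you must actually compute the action of an admissible element on the invariant measure of the distinguished orbit (equivalently show $\Sc^*(O_0)^{\widetilde H,\chi}=0$), which is the one non-formal step of the whole proposition.

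In short: the $R_{G,H}$ portion of your argument should be deleted, as it is supplied by the regularity hypothesis; and the $O_0$ portion, while pointed in the right direction and matching the paper's use of Proposition \ref{Submersive}, stops short at precisely the step that requires work.
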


\begin{proof}
Let $O_{0} \subset \mathcal{N}_{G,H}$
be the distinguished orbit. Let $\widetilde{H}=\langle Ad(H),Ad(g)
\rangle$ be the group of automorphisms of $\g^{\sigma}$ generated
by the adjoint action of $H$ and $g$. Let $\chi$ be the character
of $\widetilde{H}$ defined by $\chi(\widetilde{H}-H)=-1.$ We need
to show
$$\Sc^{*}_{Q(g^{\sigma})}(O_{0})^{\widetilde{H},\chi}=0$$
By Proposition \ref{Strat} it is enough to show
$$\Sc^{*}(O_{0},Sym^{k}(CN_{O_{0}}^{Q(\g^{\sigma})}))^{\widetilde{H},\chi}=0$$
Notice that $\widetilde{H}$ acts trivially on
$Spec(O(\g^{\sigma}))^{H}$. Hence, by Proposition \ref{Submersive}
the bundle $N_{O_{0}}^{Q(\g^{\sigma})}$ is trivial as a
$\widetilde{H}$ bundle. This completes the proof.
\end{proof}

\end{document}